\journal{Advances in Mathematics}
\definecolor{todo}{rgb}{1,0,0}
\definecolor{answer}{rgb}{0,0,1}
\definecolor{new}{rgb}{1,0,1}
\definecolor{conditional}{rgb}{0,1,0}
\definecolor{e-mail}{rgb}{0,.40,.80}
\definecolor{reference}{rgb}{.20,.60,.22}
\definecolor{mrnumber}{rgb}{.80,.40,0}
\definecolor{citation}{rgb}{0,.40,.80}
\DeclareMathAlphabet{\mathpzc}{OT1}{pzc}{m}{it}
\newcommand \one {{\mathbbm 1}}
\DeclareMathOperator{\At}{\mathrm F}
\DeclareMathOperator{\Ker}{\mathrm ker}
\DeclareMathOperator{\Img}{\mathrm Im}
\DeclareMathOperator{\Id}{\mathrm id}
\DeclareMathOperator{\End}{\mathrm End}
\def\QX{{\mathbb Q}}
\def\ZX{{\mathbb Z}}
\def\GL{{\bf GL}}
\def\ST{{\bf ST}}
\def\SL{{\bf SL}}
\def\sl{{\rm sl}}
\def\C{{\rm C}}
\def\Hom{{\rm Hom}}
\def\Ga{{\bf G_a}}
\def\Gm{{\bf G_m}}
\DeclareMathOperator{\K}{\bf k}
\DeclareMathOperator{\F}{\mathcal F}
\DeclareMathOperator{\Z}{\mathbb Z}
\DeclareMathOperator{\Aut}{\bf Aut}
\DeclareMathOperator{\Q}{\mathbb Q}
\DeclareMathOperator{\I}{\bf I}
\DeclareMathOperator{\Span}{span}
\DeclareMathOperator{\Ru}{R_u}
\DeclareMathOperator{\Rep}{\bf Rep}
\DeclareMathOperator{\id}{id}
\DeclareMathOperator{\U}{\mathcal U}
\def\Ob{{\mathrm Ob}}
\DeclareMathOperator{\Vect}{\bf Vect}
\DeclareMathOperator{\Cat}{\mathcal{C}}
\DeclareMathOperator{\Char}{char}
\theoremstyle{plain}
\newtheorem{theorem}{Theorem}[section]
\newtheorem{lemma}[theorem]{Lemma}
\newtheorem{proposition}[theorem]{Proposition}
\newtheorem{corollary}[theorem]{Corollary}
\theoremstyle{definition}
\newtheorem{definition}[theorem]{Definition}
\newtheorem{example}[theorem]{Example}
\theoremstyle{remark}
\newtheorem{remark}[theorem]{Remark}
\newcommand{\Le}{\leqslant}
\newcommand{\Ge}{\geqslant}
\begin{document}
\begin{frontmatter}

\title{Zariski closures of reductive linear differential algebraic groups}
\author{Andrey Minchenko\fnref{thank1,formeraddress}}
\ead{an.minchenko@gmail.com}
\address{University of Western Ontario, Department of Mathematics,
 London, ON N6A 5B7, Canada}

\author{Alexey Ovchinnikov\fnref{thank2}}
\ead{aiovchin@gmail.com}
\address{City University of New York,
Queens College, Department of Mathematics,
65-30 Kissena Blvd, Flushing, NY 11367, USA
}

\fntext[thank1]{This author was supported by  the NSF Grant DMS-0901570.}
\fntext[formeraddress]{The former address is Cornell University, Department of Mathematics,
 Ithaca, NY 14853-4201, USA}
\fntext[thank2]{This author was supported by the grants: NSF  CCF-0901175, 0964875, and 0952591 and  PSC-CUNY~60001-40~41.}
\date\today

\begin{keyword}
differential algebraic group\sep Zariski closure \sep differential  Tannakian category
\MSC 12H05 (primary) \sep 13N10 \sep 20G05 (secondary)
\end{keyword}

\begin{abstract}  Linear differential algebraic groups (LDAGs) appear as Galois groups of systems of linear differential and difference equations with parameters. These groups measure differential-algebraic dependencies among solutions of the equations. LDAGs are now also used in factoring partial differential operators.
In this paper, we study Zariski closures of LDAGs. In particular, we give a Tannakian characterization of algebraic groups that
are Zariski closures of a given LDAG. Moreover, we show that the Zariski closures that correspond to representations of minimal dimension of  a reductive LDAG are all isomorphic. 
In addition, we give a Tannakian description of simple LDAGs.
This substantially extends the classical results of P.~Cassidy and, we hope, will have an impact on developing algorithms that compute differential Galois groups of the above equations  and factoring partial differential operators.
\end{abstract}
\end{frontmatter}

\section{Introduction}
In this paper, we continue developing  a Tannakian approach to the representation theory
of linear differential algebraic groups (LDAGs) over differential fields with several commuting derivations started in~\cite{OvchRecoverGroup,OvchTannakian}. We combine it with the classical
approach to these groups initiated by
Cassidy in \cite{Cassidy,CassidyRep}. A noncommutative LDAG  is called simple if it has no connected normal differential
algebraic subgroups \cite{CassidyClassification,Buium}. A LDAG is called reductive if  its differential unipotent radical is trivial~\cite{CassidyUnipotent,Pillay3}.

Our main result is a characterization of
reductive LDAGs in terms of Zariski closures using Cassidy's results~\cite{CassidyClassification} on simple LDAGs. We show that a LDAG $G$ is reductive if and only if its Zariski closure
in a faithful representation of $G$ of minimal dimension is a reductive linear algebraic
group (see Theorem~\ref{main}). This gives a complete Tannakian description of the category of differential representations of a reductive LDAG (see Corollary~\ref{cor:main}).

In general, the Zariski closure of a LDAG $G$ does depend on the embedding of $G$ into $\GL(V)$.
Moreover, if $G$ is reductive, its Zariski closure does not have to be so (see Examples~\ref{ex:nonfaithful} and~\ref{sl2example}). As an application of our results for reductive LDAGs we give a Tannakian characterization of simple LDAGs (see Theorems~\ref{thm:algsimple} and~\ref{thm:diffsimple}).  
This should have applications to factoring partial differential operators in the sense of~\cite{MichaelPhyllisFactoring}.

In order to show the uniqueness result, we use the Tannakian approach and, in particular, show that
if one takes a generator $X$ of a neutral {\it differential} Tannakian category  \cite{OvchTannakian,difftann,Moshe}, then the neutral Tannakian category  \cite{Saavedra,Deligne} generated by $X$ is the category of representations of the Zariski closure of the LDAG that
corresponds to $X$ (see Theorem~\ref{ZariskiClosure}). 
In general, the category of representations of a reductive LDAG is not semisimple. This is one of the main difficulties of the theory. Using the results of~\cite{OvchRecoverGroup}, we show that it is semisimple if and only if the group is not only reductive, but is also
conjugate to a group of matrices with entries that are constant under our derivations (see Theorem~\ref{semisimplethm}).

LDAGs appear as Galois groups of systems of linear differential and difference equations with parameters~\cite{PhyllisMichael,CharlotteMichael,CharlotteComp,CharlotteLucia}. These groups measure differential-algebraic dependencies among solutions of the equations. At present, we do not have an algorithm that computes this Galois group. However,  solving such an important problem
becomes more feasible with the results we present in this paper. Indeed, a usual algorithm that computes the Galois group of a linear differential equation (without parameters) generally operates with a list of groups that can possibly occur and step-by-step eliminates the choices (see, for example, \cite{Kovacic1,FelixMichael1,FelixMichael2,FelixMichael3,HRUW,FelixJAW,Michael}). 

Hence, by eliminating non-simple and non-reductive LDAGs  our result will contribute to such a step in a future algorithm that computes the Galois group of a linear differential and difference equation with parameters. For second order differential equations, this might combined with the results on differential algebraic subgroups and differential representations of $\SL_2$ \cite{SitSL2,MinOvRepSL2}. Also, algebraically finite dimensional LDAGs have been studied in \cite{Buium2,Buium1}. Their further connections with the generalized differential Galois theory appeared in \cite{Pillay2,Pillay1}.

The rest of the paper is organized as follows.  Section~\ref{Basics}
gives formal definitions and properties  of linear differential
algebraic groups including  prolongations of representations.  In Sections~\ref{unipotentsection} and~\ref{reductivesection}, we introduce unipotent LDAGs (Definition~\ref{unipotentdef} and Lemma~\ref{unipotentlemma}), show existence of differential unipotent radicals,
and characterize semisimple categories of representation of LDAGs. 
 In Section~\ref{uniqueness}, we show our main result: the uniqueness of
Zariski closures in faithful representations of minimal dimension for reductive LDAGs and the application to describing all simple LDAGs in Tannakian terms. Section~\ref{diffChevalley} contains the differential Chevalley theorem on realizing any LDAG as a stabilizer of a line defined over the field of definition of the group. This result is not only new and interesting on its own as an alternative way of viewing the Tannakian approach but is also used directly in~\cite{CharlotteLucia}.

\section{Basic definitions}\label{Basics}
 A $\Delta$-ring $R$, where $\Delta =\{\partial_1,\ldots,\partial_m\}$, is a commutative associative ring with the unit  and commuting derivations $\partial_i: R\to R$ such that
$$
\partial_i(a+b) = \partial_i(a)+\partial_i(b),\quad \partial_i(ab) =
\partial_i(a)b + a\partial_i(b)
$$
for all $a, b \in R$ and $i$, $1\Le i \Le m$. If $\K$ is a field and
a $\Delta$-ring then $\K$ is called a $\Delta$-field. We restrict ourselves to the case of
$$
\Char\K = 0.
$$
For example, $\Q$ is a $\Delta$-field with the unique
possible derivation (which is the zero one). The field
$\mathbb{C}(t)$ is also a $\Delta$-field with $\partial(t) = f,$
and this $f$ can be any element of $\mathbb{C}(t).$
Let
$$
\Theta = \left\{\partial_1^{i_1}\cdot\ldots\cdot\partial_m^{i_m}\:|\: i_j\in \Z_{\Ge 0}\right\}.
$$
The action of $\Delta$  on $\Delta$-ring $R$ induces an action of $\Theta$ on $R$.

Let $R$ be a $\Delta$-ring. If $B$ is an $R$-algebra, then $B$ is a $\Delta$-$R$-algebra
if the action of $\Delta$ on $B$ extends the
action of $\Delta$ on $R$.
Let $Y = \{y_1,\ldots,y_n\}$ be a set of variables. We differentiate them:
$$
\Theta Y := \left\{\theta y_j
\:\big|\: \theta \in \Theta,\ 1\Le j\Le n\right\}.
$$
The ring of differential polynomials $R\{Y\}$ in
the differential indeterminates $Y$
over a $\Delta$-ring $R$ is
the ring of commutative polynomials $R[\Theta Y]$
in infinitely many algebraically independent variables $\Theta Y$ with
derivations $\Delta$ that 
extend the $\Delta$-action on $R$ as follows:
$$
\partial_i\left(\theta y_j\right) := (\partial_i\theta)y_j
$$
for all $1\Le i\Le m$ and $1 \Le j \Le n$.
An ideal $I$ in a $\partial$-ring $R$ is called differential if it is stable under the action of
$\partial$, that is,
$$
\partial_i(a) \in I \ \ \text{for all} \ a \in I,\ 1\Le i \Le m.
$$

We shall recall some definitions and results from differential
algebra (see for more detailed information \cite{Cassidy,Kol})
leading up to the ``classical definition'' of a linear differential
algebraic group.
Let $\U$ be a  differentially closed field containing $\K$ (see \cite[Definition 3.2]{PhyllisMichael} and the references given there).
Let also $C\subset\U$ be its subfield of constants\footnote{One can show that the field $C$ is algebraically closed.}, that is, $$C = \bigcap_{1\Le i\Le m}\ker\partial_i.$$

\begin{definition} For a differential field extension $K\supset \K$, a {\it Kolchin closed} subset $W(K)$ of $K^n$ over $\K$ is the set of common zeroes
of a system of differential algebraic equations with coefficients in $\K,$ that is, for $f_1,\ldots,f_k \in \K\{Y\}$ we define
$$
W(K) = \left\{ a \in K^n\:|\: f_1(a)=\ldots=f_k(a) = 0\right\}.
$$
\end{definition}

There is a bijective correspondence between Kolchin closed subsets
$W$ of $\U^n$ defined over $\K$ and radical differential ideals
$\I(W) \subset \K\{y_1,\ldots,y_n\}$
generated by the differential polynomials $f_1,\ldots,f_k$ that define $W$.
In fact, the $\partial$-ring $\K\{Y\}$ is
Ritt-Noetherian, meaning that every radical
differential ideal is the radical of a finitely
generated differential ideal, by the Ritt-Raudenbush basis theorem.
Given a Kolchin closed subset $W$ of
$\U^n$ defined over $\K$, we let the coordinate ring $\K\{W\}$
be:
$$
\K\{W\} = k\{y_1,\ldots,y_n\}\big/\I(W).
$$
A differential polynomial map $\varphi : W_1\to W_2$ between Kolchin closed subsets of $\U^n,$ defined over $\K,$ is given in coordinates by differential polynomials in
$\K\{W_1\}$. Moreover, to give $$\varphi : W_1 \to W_2$$
is equivalent to defining $$\varphi^* : \K\{W_2\} \to \K\{W_1\}.$$

\begin{definition}\cite[Chapter II, Section 1, page 905]{Cassidy} A {\it linear differential
algebraic group} 
is a Kolchin closed subgroup $G$ of $\GL_n(\U),$
that is, an intersection
of a Kolchin closed subset of $\U^{n^2}$ with $\GL_n(\U)$, which is closed under
the group operations.
\end{definition}

As mentioned in the introduction, we will also use the abbreviation LAG (respectively, LDAG) for linear algebraic group (respectively, linear differential algebraic group). Note that it follows from \cite[Theorem~4.3]{Waterhouse} that the Zariski closure of a LDAG $G \subset \GL_n(\U)$ is a linear algebraic group.
Here, we identify $\GL_n(\U)$ with a Zariski closed
subset of $\U^{n^2+1}$ given by
$$\left\{(A,a)\:\big|\: (\det(A))\cdot a-1=0\right\}.$$
If $X$ is an invertible $n\times n$ matrix, we
can identify it with the pair $(X,1/\det(X))$. Hence, we may represent the coordinate ring of $\GL_n(\U)$ as
$$
\K\{X,1/\det(X)\}.$$
Denote $\GL_1$ simply by $\Gm$. Its coordinate ring
is $\K\{y,1/y\},$ where $y$ is a differential indeterminate.
The LDAG with coordinate ring $\K\{y\}$ and the usual group structure is denoted by $\Ga$.

For a group $$G\subset\GL_n(\U),$$ we denote   its the Zariski closure  in $\GL_n(\U)$ by $\overline{G}$. Then, $\overline{G}$ is a linear algebraic group over $\U$.

\begin{definition}\cite{CassidyRep} Let $G$ be a LDAG. A differential polynomial
group homomorphism  $$\rho : G \to \GL(V)$$ is called a
{\it differential representation} of $G$, where $V$ is a
finite dimensional vector space over $\K$. Such a space is
called a {\it $G$-module}. As usual, morphisms between $G$-modules are $\K$-linear maps that are $G$-equivariant. The category of differential representations of $G$ is denoted by $\Rep_G$.
\end{definition}

By \cite[Proposition~7]{Cassidy}, $\rho(G)\subset\GL(V)$ is a differential algebraic subgroup. Moreover, by \cite[Proposition~8]{Cassidy}, if $\rho$ is faithful and $G\subset\GL(W)$, then there exists a representation $$\rho^{-1} : \rho(G)\to\GL(W)$$ such that $\rho\circ\rho^{-1}=\rho^{-1}\circ\rho=\id$. 

Given a representation $\rho$ of a LDAG $G$, one can define its prolongations 
$$F_i(\rho) : G \to \GL(F_iV)
$$ with respect to $\partial_i$, $1\Le i\Le m$, as follows~\cite[Definition 4 and Theorem 1]{OvchRecoverGroup}: let $$F_i(V) =\leftidx{_{\K}}{\left((\K\oplus \K\partial_i)_{\K}\otimes_{\K} V\right)}$$
as vector spaces. Here, $\K\oplus \K\partial_i$ is considered as the right $\K$-module:
$$
\partial_i\cdot a = \partial_i(a) + a\partial_i
$$
for all $a \in \K$.
Then the action of $G$ is given by $F_i(\rho)$ as follows:
$$
F_i(\rho)(g) (1\otimes v) := 1\otimes \rho(g)(v),\quad F_i(\rho)(g)(\partial_i\otimes v) := \partial_i\otimes\rho(g)(v)
$$
for all $g \in G$ and $v \in V$. In the language of matrices, if $A_g \in \GL(V)$ corresponds to the action of $g \in G$  on $V$, then the matrix
$$
\begin{pmatrix}
A_g&\partial_i A_g\\
0&A_g
\end{pmatrix}
$$
corresponds to the action of $g$ on $F_i(V)$. The above induces an exact sequence of differential representations of $G$ (see \cite[Definition 1]{OvchTannakian} and \cite[Section 5]{Moshe}):
\begin{equation}\label{eq:prolongation}
\begin{CD}
0@>>> V @>\iota_{V,i} >> F_i(V)@>\pi_{V,i} >> V@>>> 0
\end{CD}
\end{equation}
with
$$
\iota_{V,i}( v) = 1\otimes v,\quad \pi_{V,i}(1\otimes v) = 0,\ \pi_{V,i}(\partial_i\otimes v) = v,\quad v\in V.
$$
Moreover, for $1\Le i,j\Le m$ we have $G$-isomorphisms
\begin{equation}\label{eq:commutingreps}
S_{i,j} : F_i(F_j(V)) \stackrel{\sim}\longrightarrow F_j(F_i(V))
\end{equation}
given by
\begin{align*}
&1\otimes 1\otimes v\mapsto 1\otimes1\otimes v,\quad \partial_i\otimes1\otimes v\mapsto1\otimes\partial_i\otimes v,\\
&\partial_i\otimes \partial_j\otimes v\mapsto \partial_j\otimes\partial_i\otimes v,\ 1\otimes\partial_j\otimes v\mapsto\partial_j\otimes1\otimes v,
\end{align*}
where $v \in V$.
Further prolongations $$F_i^p(\rho) : G \to \GL\left(F_i^p(V)\right)$$ are given by iterating the construction, and
further results on this approach are contained in~\cite{OvchRecoverGroup,GGO}. Moreover, \cite{GGO} develops differential Tannakian categories in a more general setting: when $\partial_1,\ldots,\partial_m$ do not necessarily commute, by rather operating with finite dimentional Lie algebras of derivations without choosing bases.

\begin{remark}
There is also a Grothendieck-style approach to linear differential algebraic groups
and their representations via representative functors, differential Hopf algebras, their
comodules, and prolongation functors. This approach does not use differentially closed fields \cite{OvchRecoverGroup,OvchTannakian}. For convenience, we will use {\bf both} languages interchangeably in what follows.
\end{remark}

 \section{Tannakian results and definitions}
 As we noted in the introduction, our plan is first to give a Tannakian characterization of
 Zariski closures of LDAGs and then use it for our main result.

 \subsection{Zariski closures of LDAGs and differential Tannakian categories}
 The following definition is taken from \cite[Definition 2]{OvchTannakian} (where they were first introduced) with a slight modification and  generalization to the case of several commuting derivations reflected in isomorphisms~\eqref{eq:commutingderivations} (see also~\cite{GGO}).

\begin{definition}\label{defin-diffTan}
A {\it neutral differential Tannakian category} over a differential field $(\K,\Delta=\{\partial_1,\ldots,\partial_m\})$ is a rigid tensor abelian category $\Cat$ with $\End_{\Cat}(\one)=\K$, where $\one$ is the unit object in $\Cat$, supplied with the following data:
\begin{itemize}
\item
functors $\At_{\Cat,i}:\Cat\to\Cat$, $1\Le i\Le m$, with exact sequences of functors
$$
\begin{CD}
0@>>>\Id_{\Cat}@>\iota_i>>\At_{\Cat,i}@>\pi_i>>\Id_{\Cat}@>>> 0,
\end{CD}
$$
that is, a functorial exact sequence
\begin{equation}\label{eq-exactAtcat}
\begin{CD}
0@>>>X @>{\iota_{X,i}}>> \At_{\Cat,i}(X)@>{\pi_{X,i}}>> X@>>> 0
\end{CD}
\end{equation}
for any object $X$ in $\Cat$, such that the differential structure induced by all the $\At_{\Cat,i}$'s on $\K$ coincides with the given $\Delta$ (see~\cite[Lemma~7]{difftann} and~\cite[Section~5.2.2]{Moshe})
\item
functorial isomorphisms
\begin{equation}\label{eq:commutingderivations}
S_{i,j} : \At_{\Cat,i}(\At_{\Cat,j}(X))\stackrel{\sim}\longrightarrow \At_{\Cat,j}(\At_{\Cat,i}(X))
\end{equation}
for all $1\Le i,j\Le m$ and $X \in \Ob(\Cat)$;
\item
functorial morphisms
\begin{equation}\label{eq-TXYcat}
T_{X,Y,i}:\At_{\Cat,i}(X\otimes Y)\to \At_{\Cat,i}(X)\otimes\At_{\Cat,i}(Y)
\end{equation}
for all objects $X$ and $Y$ in $\Cat$ and $1\Le i\Le m$;
\item
an exact tensor $k$-linear functor $\omega:\Cat\to \Vect_{\K}$ with 
 functorial isomorphisms of $\K$-vector spaces
$$
\Gamma_{X,i}:\omega(\At_{\Cat,i}(X))\stackrel{\sim}\longrightarrow F_i(\omega(X))
$$
for any object $X$ in $\Cat$, such that $\Gamma$ sends exact sequences of type \eqref{eq-exactAtcat} to exact sequences of type \eqref{eq:prolongation}, isomorphisms of type~\eqref{eq:commutingderivations} to isomorphisms of type~\eqref{eq:commutingreps}, and morphisms of type \eqref{eq-TXYcat} to the morphisms of the following type:
\begin{equation*}
T_{U,V,i}:F_i(U\otimes_{\K} V)\to F_i(U)\otimes_{\K}F_i(V)
\end{equation*}
given by the formula
\begin{align*}
1\otimes(u\otimes v)&\mapsto (1\otimes u)\otimes(1\otimes v),\\
\partial_i\otimes(u\otimes v)&\mapsto (\partial_i\otimes u)\otimes (1\otimes v)+(1\otimes u)\otimes (\partial_i\otimes v),
\end{align*}
for all $u\in U$, $v\in V$, and $1\Le i\Le m$.
Such a functor $\omega$ is called a differential fibre functor.
\end{itemize}
\end{definition}

For a differential fibre functor $\omega : \Cat \to \Vect_{\K}$, we denote the differential tensor and tensor automorphisms of the functor by $$\Aut^{\Delta,\otimes}(\omega)\quad \text{and}\quad \Aut^{\otimes}(\omega),$$  respectively \cite[Definition 8]{OvchRecoverGroup} (generalized to several commuting derivations, \cite{GGO}) and \cite[pages 128--129]{Deligne}. It is shown in \cite[Theorem~2]{OvchTannakian} (together with isomorphisms~\eqref{eq:commutingderivations} that show that in the recovered differential structure the derivations in $\Delta$ commute) that every
neutral differential Tannakian category $(\Cat,\omega)$ is equivalent to the category of representations of a (pro-)LDAG $G$. In this case, viewed as representable functors,
$$
G \cong \Aut^{\Delta,\otimes}(\omega).
$$
Similarly to \cite[Proposition~2.20(b)]{Deligne}, it follows that $G$ is a LDAG if and only if the category $\Cat$ has 
one differential abelian tensor generator $X \in \Ob(\Cat)$. That is, $\Cat$ is the smallest subcategory in itself
containing $X$  and closed under $\At_{\Cat,i}$, $1 \Le i\Le m$, $\otimes$, $\oplus$, and taking duals and subquotients. In this case, similarly to the proof of  \cite[Proposition~2.20(b)]{Deligne}, under the above equivalence, $X$ is a faithful representation of the LDAG $G$ (see also \cite[Proposition~2]{OvchRecoverGroup}).  

\begin{theorem}\label{ZariskiClosure} Let a neutral
 differential Tannakian category
 $\Cat$
 have a differential rigid abelian tensor generator
 $X$. Then,
 there is a natural embedding of 
 $$G := \Aut^{\Delta,\otimes}(\omega)$$ into $$H_X := \Aut^\otimes(\omega|_\mathcal{D})$$
 so that $G$ is Zariski dense in $H_X$, where $\mathcal{D}$ is the rigid abelian
 tensor category generated by 
 the object $X$.
 \end{theorem}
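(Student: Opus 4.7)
The plan is to construct the map $G\to H_X$ by restriction and then deduce the density statement from classical Tannakian reconstruction applied to the subcategory $\mathcal{D}$.

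First, I observe that $\omega|_{\mathcal{D}}\colon\mathcal{D}\to\Vect_{\K}$ is a neutral classical fibre functor on the rigid abelian tensor category $\mathcal{D}$, since $\omega$ is faithful, exact, $\K$-linear, and symmetric monoidal. Every differential tensor automorphism of $\omega$ restricts to a classical tensor automorphism of $\omega|_{\mathcal{D}}$, and this restriction is functorial in the base $\K$-algebra, so it defines a morphism of affine $\K$-group schemes $\rho\colon G\to H_X$.

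Next, I show that $\rho$ is a closed embedding by checking injectivity on $R$-points for every $\K$-algebra $R$. Suppose $\sigma\in G(R)$ restricts to the identity on $\omega|_{\mathcal{D}}$; in particular $\sigma_X=\id$. Because $\sigma\in\Aut^{\Delta,\otimes}(\omega)$ commutes with the prolongation functors $\At_{\Cat,i}$ through the functorial isomorphisms $\Gamma_{X,i}$, we obtain $\sigma_{\At_{\Cat,i}(X)}=F_i(\sigma_X)=\id$. Induction on the number of iterated prolongations shows that $\sigma$ is the identity on every object of the form $\At_{\Cat,i_k}\cdots\At_{\Cat,i_1}(X)$; being a classical tensor automorphism, $\sigma$ is then the identity on the classical rigid abelian tensor subcategory they generate. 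Since $X$ is a \emph{differential} tensor generator, this subcategory exhausts $\Cat$, so $\sigma=\id$.

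To prove density, I use the fact (noted just before the theorem statement, a differential analogue of \cite[Proposition~2.20(b)]{Deligne}) that $X$ is a faithful representation of $G$ under the equivalence $\Cat\simeq\Rep_G$. Let $\overline{G}\subseteq\GL(\omega(X))$ denote the Zariski closure of the image of $G$. By classical Tannakian reconstruction, $\Rep(\overline{G})$ is equivalent to the classical rigid abelian tensor subcategory of $\Rep_G$ generated by the faithful object $X$; under the equivalence $\Cat\simeq\Rep_G$ this subcategory is precisely $\mathcal{D}$. On the other hand, applying classical Tannakian reconstruction to $(\mathcal{D},\omega|_{\mathcal{D}})$ yields $\mathcal{D}\simeq\Rep(H_X)$. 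Composing, $\Rep(H_X)\simeq\Rep(\overline{G})$, so $H_X\cong\overline{G}$; under this identification $\rho$ becomes the tautological dense inclusion $G\hookrightarrow\overline{G}$.

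The main subtlety I anticipate is checking that the two Tannakian identifications of $\mathcal{D}$ (via the differential theory and via classical reconstruction) combine to give the \emph{same} embedding $G\hookrightarrow H_X$ as the restriction map $\rho$. This amounts to a naturality verification for the reconstruction isomorphisms, and is the step requiring the most bookkeeping, though no essentially new ideas.
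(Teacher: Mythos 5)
Your embedding/injectivity step is essentially the paper's: the paper simply cites the fact that a differential tensor automorphism is determined by its component at the differential generator $X$ (their reference to formula (36) of the differential Tannakian paper), which is exactly your prolongation-compatibility induction, so that part matches (modulo the small imprecision that $G(R)$ only makes sense for $\Delta$-$\K$-algebras $R$, not arbitrary $\K$-algebras). For density, however, you take a genuinely different route. The paper argues by contradiction inside $H_X$: if the Zariski closure $N_X$ of $G$ in $\rho(H_X)\subset\GL(\omega(X))$ were proper, Chevalley's theorem would give an $N_X$-stable line $L\subset\omega(Y)$, $Y\in\Ob(\mathcal{D})$, not stable under $H_X$; since $L$ is then $G$-stable it defines a subobject of $Y$ in $\Cat$, hence an object of $\mathcal{D}=\Rep_{H_X}$, forcing $L$ to be $H_X$-stable, a contradiction. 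You instead identify $H_X$ with the Zariski closure $\overline{G}\subset\GL(\omega(X))$ by matching $\Rep_{\overline{G}}$ with $\mathcal{D}$ and invoking Tannaka duality twice; this is viable and arguably gives a cleaner statement ($H_X\cong\overline{G}$ on the nose), at the cost of the naturality bookkeeping you flag. The one point you should not attribute to ``classical Tannakian reconstruction'' alone is the equivalence $\Rep_{\overline{G}}\simeq\mathcal{D}$: reconstruction only says $\Rep_{\overline{G}}$ is tensor-generated by its tautological module $\omega(X)$; to identify it with the subcategory of $\Rep_G$ generated by $X$ you must check that the restriction functor $\Rep_{\overline{G}}\to\Rep_G$ is fully faithful and that its essential image is closed under $G$-subquotients, i.e.\ that every $G$-stable subspace of an algebraic $\overline{G}$-module is $\overline{G}$-stable and every $G$-equivariant map between such modules is $\overline{G}$-equivariant. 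This does hold, precisely because $G$ is Zariski dense in $\overline{G}$ by definition and stabilizers of subspaces and equivariance conditions are Zariski closed, but it is the real content of the density step (playing the role Chevalley's theorem plays in the paper) and should be spelled out rather than absorbed into the citation.
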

 \begin{proof}
 Let $K$ be a $\Delta$-$\K$-algebra and $\lambda \in G(K)$. Since $\lambda$
 is uniquely determined by its action on $X$ (see \cite[formula (36)]{difftann}),
 the restriction map
 $$
 R : G \to \Aut(X)\otimes K,\quad \lambda \mapsto \lambda_X
 $$
 is injective. Similarly, $\lambda_X$ extends uniquely to a tensor automorphism of $\omega|_\mathcal{D}(K)$. This gives an embedding $$G \to H_X$$ functorial
 in $X$.
 By Tannaka's theorem (see \cite[Theorem~1]{Saavedra}, \cite[Theorem~2.11]{Deligne}, \cite[Theorem~2.5.3]{Springer}),
 \begin{align}\label{DRepH}
 \mathcal{D} = \Rep_{H_X}
 \end{align} and $H_X$ is a linear algebraic group with its faithful representation $\rho$ into $\GL(\omega(X))$.
Let $N_X$ be the Zariski closure of $G$ in $$\rho(H_X)\subset \GL(\omega(X)).$$ If
$$N_X \subsetneq \rho(H_X),$$ by Chevalley's theorem \cite[Theorem 5.1, Chapter II]{Borel}, there would be an $N_X$-invariant line $$L\subset\omega(Y), \quad Y\in\Ob(\mathcal{D}),$$  that was not $H_X$ invariant.
Hence, the line $L$ is also $G$-invariant and, therefore, corresponds to an object in $\mathcal{D}$. This  contradicts~\eqref{DRepH}.
 \end{proof}

Note that, starting with a different $X$, one can get different linear algebraic groups $H_X$ in which
$G$ is Zariski dense (see Example~\ref{ex:nonfaithful}). It is a question how to define a canonical $H_X$. If one asks
$X$ to be of the smallest possible dimension, are  different $H$'s coming out of this isomorphic? The answer is `Yes', and this will be resolved in Section~\ref{uniqueness}.

\subsection{Extending representations to Zariski closures}

Recall that a representation is called completely reducible if it is isomorphic to a direct sum of irreducible representations, that is, of representations with no non-zero proper subrepresentations.

\begin{theorem}\label{prolongation}
Let $G \subset \GL(W)$ be a LDAG. Then any completely reducible representation
$$\rho : G \to \GL(V)$$ extends to an algebraic representation $\bar\rho$ of $\overline{G}$. Moreover, if $\rho$ is faithful and $\overline G$ is reductive, then $\bar\rho$ is faithful.
\end{theorem}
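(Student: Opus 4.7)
The plan is to use the differential Tannakian characterization of the Zariski closure. By Theorem~\ref{ZariskiClosure}, $\Rep_{\overline{G}}$ is equivalent to the rigid abelian tensor subcategory $\mathcal{D}\subset\Rep_G$ generated by $W$ using only $\otimes$, duality, $\oplus$ and passage to subquotients, that is, without any use of the prolongations $F_i$. Producing the algebraic extension $\bar\rho$ therefore reduces to showing $V\in\mathcal{D}$.

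Since $V$ is completely reducible, $V=\bigoplus_j V_j$ with each $V_j$ a simple $G$-module, and it suffices to show that every simple $G$-module $X$ lies in $\mathcal{D}$. By the differential Tannakian generation property recalled just before Theorem~\ref{ZariskiClosure}, $X$ is a subquotient of some object $T$ built from $W$ by iterated $\otimes$, duals, $\oplus$, subquotients, and prolongations $F_i$. The key observation is the prolongation exact sequence~\eqref{eq:prolongation}: for any $T'\in\Rep_G$, the simple composition factors of $F_i(T')$ are precisely those of $T'$, each with multiplicity doubled. Induction on the number of $F_i$-applications occurring in the construction of $T$ then shows that every simple subquotient of $T$ is a simple subquotient of some object $T_0$ built from $W$ using only $\otimes$, duals, $\oplus$ and subquotients; since $T_0\in\mathcal{D}$ and $\mathcal{D}$ is closed under subquotients, $X\in\mathcal{D}$. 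This yields $V\in\mathcal{D}$ and hence the desired algebraic extension $\bar\rho:\overline{G}\to\GL(V)$.

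For the moreover statement, set $K:=\ker\bar\rho$, a closed normal subgroup of $\overline{G}$. Since $\bar\rho|_G=\rho$ is faithful, $K\cap G=\{e\}$, and $\overline{G}/K\cong\bar\rho(\overline{G})=\overline{\rho(G)}$ is another Zariski closure of $G$, this time arising from the faithful representation $\rho$. The reductivity of $\overline{G}$ combined with the Zariski density of $G$ in $\overline{G}$ forces $K=\{e\}$: decomposing the reductive group $\overline{G}$ as an almost-direct product of its simple factors and a central torus, one invokes Cassidy's classification of Zariski dense LDAGs in simple algebraic groups to rule out any non-trivial closed normal subgroup of $\overline{G}$ that intersects $G$ only in $\{e\}$. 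The main obstacle in the proof is the inductive analysis of simple subquotients passing through prolongations, which must be tracked jointly with the tensor operations and the commutation isomorphisms~\eqref{eq:commutingreps}; the moreover part's density-based vanishing of $K$ is the second delicate point, and this is precisely where the assumption of reductivity of $\overline{G}$ is essential.
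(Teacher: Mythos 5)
Your first half is essentially right, and it is a legitimate variant of the paper's argument: the paper also reduces to showing that each irreducible summand lies in the tensor category $\mathcal{D}_W$ generated by $W$, but it does so by a minimal-presentation descent (choosing a presentation with smallest prolongation exponent and contradicting minimality via $\pi(U)=\pi(S)$), whereas you strip the prolongations by a Jordan--H\"older/composition-factor count using the exact sequence \eqref{eq:prolongation}. Both rest on \cite[Proposition~2]{OvchRecoverGroup}; your route works, provided you record the small lemma that a tensor product of subquotients is a subquotient of the tensor product, so that the induction passes through $\otimes$ and duals as well as through $F_i$.

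The gap is in the ``moreover'' part. The whole weight of your argument sits on the claim that a nontrivial closed normal subgroup $K\subset\overline G$ with $K\cap G=\{e\}$ cannot exist when $\overline G$ is reductive and $G$ is Zariski dense, and the one-sentence appeal to ``Cassidy's classification of Zariski dense LDAGs in simple algebraic groups'' does not establish it. That classification says nothing about the cases where $K$ is a subtorus of the central torus of $\overline G$, a finite central subgroup, or a finite normal subgroup meeting $\overline G^{\circ}$ trivially when $\overline G$ (or $G$) is not connected; those cases need Cassidy's result on dense subgroups of $\Gm^n$ containing $\Gm^n(C)$, a constant-points argument for centers, and a reduction to $\QX$-forms via Steinberg--Demazure isogenies. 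Even for the simple factors, before quoting the classification you must know that $G$ meets each simple factor of $\overline G$ in a Zariski dense subgroup, which is itself nontrivial (in the paper this is exactly what the steps $\overline{G'}=H'$ and $q(G)=G'$ in Lemma~\ref{proper} accomplish, together with Proposition~\ref{App2}). So your sentence is a placeholder for essentially the entire structural apparatus of Section~\ref{uniqueness}, developed later in the paper, and the proof is incomplete precisely at the point where reductivity must act. Compare with the paper's short Tannakian route, which you should adopt unless you intend to prove your density claim in full: since $\overline G$ is reductive, the $\overline G$-module $W$ is completely reducible, hence completely reducible as a $G$-module by density; applying the already-proved first part with the roles of $V$ and $W$ exchanged (legitimate because $\rho$ is faithful) places $W$ in the tensor category generated by $V$; therefore $\Rep_{\overline G}$ is tensor-generated by $V$ by \cite[Theorem~3.5]{Waterhouse}, and $V$ is a faithful $\overline G$-module by \cite[Proposition~2.20]{Deligne}.
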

\begin{proof}
By Theorem~\ref{ZariskiClosure}, the Zariski closure $\overline G$ is
given by 
the rigid abelian tensor category $\mathcal{D}_W$  generated by  $W$. Let
$$
\rho = \bigoplus_{j=1}^p\rho_j,\quad \rho_j : G\to \GL(V_j),\ 1\Le j \Le p,
$$
be the irreducible decomposition.
We will show that the G-module $V_j$ belongs to $\Ob(\mathcal{D}_W)$, $1\Le j\Le p$. This will imply the required result.

By \cite[Proposition 2]{OvchRecoverGroup}, the $G$-module $V_j$ is a subquotient
of several copies of
$$F^{i_1}_1(W)\otimes\ldots\otimes F^{i_{V,m}}_m(W)\otimes (W^*)^{\otimes j_V}.$$
Among all such presentations of $V_j$ choose one with the smallest maximal prolongation exponent. Denote this integer by $h$.
So, suppose we have $F_i^h(W)$ for some $i$, $1\Le i\Le m$, present
in a representation of $V_j$ of the smallest maximal order and $h \Ge 1$. We may also assume that
the degree of $F_i^h(W)$ with respect to $\otimes$ in this expression is the smallest possible. 

Then, $V_i$ can be viewed as the quotient $U/S$  for some $G$-modules
$$S \subset U \subset F_i^h(W)\otimes W' \oplus W'',$$ where $W''$ is free of $F_i^h(W)$. 
Recall the short exact sequence of $G$-modules:
\begin{equation*}
\begin{CD}
0@>>>F_i^{h-1}(W)@>\iota_{F_i^{h-1}(W),i}>>F_i^h(W)@>\pi_{F_i^{h-1}(W),i}>>F_i^{h-1}(W)@>>>0.
\end{CD}
\end{equation*}
Denote the morphisms of representations$$\left(\iota_{F_i^{h-1}(W),i}\otimes \id_{W'}\right)\oplus\id_{W''} : F_i^{h-1}(W)\otimes W'\oplus W''\to F_i^h(W)\otimes W'\oplus W''
$$ and
$$\left(\pi_{F_i^{h-1}(W),i}\otimes \id_{W'}\right)\oplus\id_{W''} : F_i^h(W)\otimes W'\oplus W''\to F_i^{h-1}(W)\otimes W'\oplus W''
$$
by $\iota$ and $\pi$, respectively. Also denote
$$
\iota\left(F_i^{h-1}(W)\otimes W'\oplus W''\right)
$$
by $W_{h-1}$. Since $U/S$ is irreducible, $\pi(U)/\pi(S)$ is either $\{0\}$ or isomorphic to $U/S$. The latter contradicts to the minimality of our choice of a ``prolongation-tensor'' presentation of $U/S$. Hence,
$$
\pi(U)=\pi(S).
$$
This implies that $$U/S = U\cap W_{h-1}/S\cap W_{h-1}.$$ Therefore, $$U/S\cong U'/S'\quad \text{with}\quad S'\subset U' \subset F_i^{h-1}(W)\otimes W' \oplus W''$$
contradicting the minimality again. Thus, $h=0$ and the $G$-module $V_j$ belongs to $\Ob(\mathcal{D}_W)$. Therefore, the representation $\rho$ extends to a representation $\overline\rho$ of $\overline G$.

Suppose now that $\overline G$ is a reductive linear algebraic group and the representation $\rho$
is faithful. By \cite[Proposition 2]{OvchRecoverGroup}, the $G$-module $W$
is an object in the differential Tannakian category generated by the $G$-module $V$. Since $\overline G$ is reductive, the $\overline G$-module $W$ is completely
reducible by \cite[Chapter~2]{SpringerInv}. Hence, $W$ is completely reducible
as a $G$-module, since every $G$-submodule of $W$ is a $\overline G$-submodule of $W$.

Therefore, by the first part of the proof, the faithful $\overline G$-module $W$ belongs to the Tannakian category generated by $V$. Hence, by \cite[Theorem~3.5]{Waterhouse}, the category $\Rep_{\overline G}$ is generated
as a Tannakian category by the $\overline G$-module $V$. Thus,
by \cite[Proposition 2.20]{Deligne}, the $\overline G$-module $V$ is faithful.
\end{proof}

\begin{example}\label{ex:nonfaithful} Consider the following faithful differential representation
of  $\Gm$ over $(\U,\partial)$, which is a reductive LDAG (see Section~\ref{reductivesection}):
$$
\rho :\ \Gm \to \GL_2(\U),\quad g \mapsto \begin{pmatrix}
g &\partial g\\
0 &g
\end{pmatrix},
\ g \in \U^*.
$$
One can show that the Zariski closure $$H_\rho := \overline{\rho(\Gm)} = \left\{\begin{pmatrix}a&b\\
0&a\end{pmatrix}\:\big|\: a\in \U^*,\: b\in\U\right\}\cong \Gm\times\Ga,$$
which is not a reductive linear algebraic group.
The representation
$$
\rho_1:\ \rho(\Gm) \to \GL_1(\U),\quad \begin{pmatrix}
g &\partial g\\
0 &g
\end{pmatrix}\mapsto \begin{pmatrix}g\end{pmatrix},\ g \in \U^*,
$$
is completely reducible and faithful. However, its extension to $H_\rho$  given by
$$
\overline{\rho_1}:\ H_\rho \to \GL_1(\U),\quad \begin{pmatrix}a&b\\
0&a\end{pmatrix} \mapsto \begin{pmatrix}
a
\end{pmatrix},\quad a \in \U^*,\ b \in \U
$$
is not faithful. Also note that $\overline{\rho_1(\rho(\Gm))} \cong \Gm$ and $H_\rho \cong \Gm\times \Ga$ are not isomorphic as linear algebraic groups. However, they are Zariski closures of the LDAG $\Gm$ in its two faithful differential representations.
\end{example}

\subsection{Unipotent linear differential algebraic groups}\label{unipotentsection}
We recall  now  a few basic facts about unipotent LDAGs.

\begin{lemma}\cite[Theorem 2]{CassidyUnipotent}\label{unipotentlemma} Let $G$ be a LDAG. The following statements are equivalent:
\begin{enumerate}
\item $G$ is conjugate to a differential algebraic subgroup of the special triangular group $\ST_n$.
\item\label{unipotentlemma2} $G$ contains no element of finite order greater than $1$.
\item\label{unipotentlemma3} $G$ has a descending normal sequence (each subgroup is normal in its
predecessor) of differential algebraic groups
$$
G = G_0\supset G_1\supset\ldots\supset G_N = \{e\}
$$
with $G_i/G_{i+1}$ isomorphic to a differential algebraic subgroup of $\Ga$.
\end{enumerate}
\end{lemma}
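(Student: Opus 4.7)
The plan is to establish the cycle of implications $(1)\Rightarrow (3)\Rightarrow (2)\Rightarrow (1)$; the last implication carries the real content.

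For $(1)\Rightarrow (3)$, after conjugation assume $G\subseteq \ST_n$. Let $U_k\subseteq \ST_n$ be the normal subgroup of matrices whose entries on the first $k$ superdiagonals above the main diagonal vanish, so that $U_0=\ST_n$, $U_{n-1}=\{e\}$, and $U_k/U_{k+1}\cong \Ga^{n-k-1}$. Intersecting with $G$ produces a descending normal series of Kolchin-closed subgroups of $G$ whose successive quotients embed into $\Ga^{n-k-1}$; refining each step by projecting onto one matrix entry at a time yields a composition series whose successive quotients are differential algebraic subgroups of $\Ga$.

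For $(3)\Rightarrow (2)$, I proceed by induction on $N$. If $g\in G$ has finite order $k>1$, its image $\bar g$ in $G_0/G_1$ lies in a differential algebraic subgroup of $\Ga$ and satisfies $\bar g^k=e$. Since $\Char \K=0$, the group $\Ga(\U)$ is torsion-free, forcing $\bar g=e$ and hence $g\in G_1$. Iterating $N$ times pushes $g$ into $G_N=\{e\}$, contradicting $k>1$.

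For $(2)\Rightarrow (1)$, the strategy has two stages: first show that every element of $G$ is a unipotent matrix, and then invoke the classical Kolchin triangularization theorem for groups of unipotent matrices to conjugate $G$ simultaneously into $\ST_n$. The first stage is the main obstacle. Fix $g\in G$ with Jordan decomposition $g=g_s g_u$ in $\GL_n(\U)$, and suppose for contradiction $g_s\neq I$. Although $g_s$ need not lie in $G$ a priori, I would consider the Kolchin closure $H:=\overline{\langle g\rangle}^{\mathrm{Kol}}\subseteq G$, an abelian LDAG whose Zariski closure decomposes as $T\times V$ with $T$ a torus of positive dimension (since $g_s\neq I$) and $V$ unipotent. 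The projection of $H$ to $T$ is a Kolchin-closed, Zariski-dense subgroup of $T$.

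The crux is then to invoke Cassidy's classification of differential algebraic subgroups of $\Gm^r$ over a differentially closed field of characteristic zero: every proper Kolchin-closed subgroup of $\Gm^r$ is cut out by linear logarithmic-derivative equations on the coordinates and therefore contains the full torsion subgroup of $\Gm^r(\U)$. Applied to the image of $H$ in $T$, this shows $H$ contains a non-trivial root of unity, contradicting (2). Once it is established that every element of $G$ is unipotent, the classical Kolchin theorem produces the desired simultaneous conjugation into $\ST_n$. I expect the main technical difficulty to be the step which upgrades ``Zariski-dense Kolchin-closed subgroup of a torus'' to ``contains torsion of all orders'', as this is where the special features of differential closure and characteristic zero enter essentially.
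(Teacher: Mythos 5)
Note first that the paper does not prove this lemma at all: it is imported verbatim from Cassidy's work on unipotent differential algebraic groups, so your argument has to stand on its own. Your implications (1)$\Rightarrow$(3) and (3)$\Rightarrow$(2) are fine, and for (2)$\Rightarrow$(1) the overall strategy (show every element of $G$ is unipotent, then invoke Kolchin's triangularization theorem) is the right one. But the decisive stage has two concrete problems. The smaller one: from $g_s\neq I$ you cannot conclude that $\overline{H}$, $H=\overline{\langle g\rangle}^{\mathrm{Kol}}$, has a torus factor of positive dimension. The diagonalizable part of $\overline{H}$ is $\overline{\langle g_s\rangle}$, which is finite whenever $g_s$ has finite order, and in that case the torus result gives nothing; this case needs a separate (easy, via Zariski density of the image in the finite diagonalizable part) treatment. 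Relatedly, your formulation of Cassidy's classification is too strong: it is false that every proper Kolchin-closed subgroup of $\Gm^r$ contains the full torsion of $\Gm^r(\U)$ (e.g.\ $\Gm\times\{1\}\subset\Gm^2$); the correct statement, which is exactly Theorem~\ref{Cassidy}\eqref{Cassidy1} of this paper, requires Zariski density --- which, to be fair, your image does have.

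The genuine gap is the final inference: torsion in the image of $H$ in the torus does not show that $H$ itself contains a non-trivial root of unity, which is what you need to contradict (2). An element $h\in H$ lying over a root of unity $t$ has the form $h=(t,v)$ with $v$ in the unipotent factor $V$ of $\overline{H}$, and such an $h$ need not be torsion; your text simply asserts the transfer of torsion from the image to $H$. The missing lifting argument can be supplied as follows: if $t^k=e$, then $h^k=(e,kv)\in H$, so $kv$ lies in the differential algebraic subgroup $H\cap V$ of the vector group $V\cong\Ga^s$; by Cassidy's theorem that differential algebraic subgroups of vector groups are cut out by linear homogeneous differential polynomials, $H\cap V$ is a $C$-vector space, hence divisible in characteristic zero, so $v\in H\cap V$ and $(t,e)=h\cdot(e,v)^{-1}\in H$ is a non-trivial torsion element of $G$ --- the desired contradiction. (Note this also explains why you cannot shortcut via the Jordan decomposition: $g_s,g_u$ need not lie in $G$, as the group $\left\{\bigl(\begin{smallmatrix} a & \partial a\\ 0 & a\end{smallmatrix}\bigr)\right\}$ of Example~\ref{ex:nonfaithful} shows, so some such correction by the unipotent coordinate is unavoidable.) With the finite-order case and this divisibility lemma added, your outline closes; as written, the key step is asserted rather than proved, and the difficulty you flag (the density theorem for tori) is precisely the part that is already available as a citation.
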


\begin{definition}\label{unipotentdef}
 Such $G$ is called a {\it unipotent} LDAG.
\end{definition}

\begin{remark}\label{unipotentremark}
By Lemma \ref{unipotentlemma}\eqref{unipotentlemma2}, the image  a  unipotent LDAG under an injective homomorphism is a unipotent LDAG. Therefore, the property of a LDAG $G$ being unipotent  does not depend on the embedding of $G$ into $\GL_n$.
\end{remark}

\begin{lemma}\cite[Theorem~4.3(b)]{Waterhouse}\label{Zariskinormal} Let $G$ be a differential algebraic subgroup of $\GL_n$ and let $H$
be a normal differential algebraic subgroup of $G$. Then, the Zariski closure $\overline H$
of $H$ is a normal algebraic subgroup of the Zariski closure $\overline G$ of $G$.
\end{lemma}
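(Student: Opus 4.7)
The plan is to use the general principle that Zariski-continuous maps take Zariski closures into Zariski closures, applied to conjugation morphisms, in two stages.

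First, I would fix an element $h\in H$ and consider the regular map
$$\psi_h:\GL_n\to\GL_n,\quad x\mapsto xhx^{-1}.$$
This is a morphism of algebraic varieties, hence continuous in the Zariski topology. Since $H$ is normal in $G$, we have $\psi_h(G)\subseteq H\subseteq\overline H$. Therefore $G\subseteq \psi_h^{-1}(\overline H)$, and the latter set is Zariski closed in $\GL_n$ because $\overline H$ is. Consequently $\overline G\subseteq \psi_h^{-1}(\overline H)$, which means $ghg^{-1}\in\overline H$ for every $g\in\overline G$ and every $h\in H$.

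Second, I would now fix an arbitrary $g\in\overline G$ and consider
$$\varphi_g:\GL_n\to\GL_n,\quad x\mapsto gxg^{-1},$$
again a morphism of algebraic varieties. The previous step shows $\varphi_g(H)\subseteq\overline H$, so $H\subseteq\varphi_g^{-1}(\overline H)$. Since $\overline H$ is Zariski closed, so is $\varphi_g^{-1}(\overline H)$, and taking the Zariski closure of the containment gives $\overline H\subseteq\varphi_g^{-1}(\overline H)$. Hence $g\overline H g^{-1}\subseteq\overline H$ for every $g\in\overline G$, which is exactly the normality of $\overline H$ in $\overline G$.

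There is essentially no obstacle here; the only point that might warrant a sentence of justification is the observation that conjugation $x\mapsto xhx^{-1}$ and $x\mapsto gxg^{-1}$ really are morphisms of algebraic varieties on $\GL_n$ (which uses the standard embedding of $\GL_n$ into affine space together with the polynomial formula for the inverse on $\GL_n$). Also, one should explicitly note that a differential algebraic subgroup of $\GL_n$ is in particular a subset of the underlying algebraic variety, so its Zariski closure makes sense, and that by \cite[Theorem~4.3]{Waterhouse} (already invoked after the definition of LDAG above) this Zariski closure is actually a linear algebraic subgroup of $\GL_n$.
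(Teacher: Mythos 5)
Your argument is correct and complete. Note, though, that the paper does not prove this lemma at all: it is quoted directly from \cite[Theorem~4.3(b)]{Waterhouse}, whose own treatment is phrased scheme-theoretically (via Hopf algebras and representable functors), which is why the paper can simply cite it for differential algebraic subgroups viewed as abstract subgroups of $\GL_n(\U)$. What you give instead is the classical density argument: conjugation maps are morphisms of the algebraic group $\GL_n$, preimages of Zariski closed sets are closed, and a closed set containing a subgroup contains its closure; applied first with $h\in H$ fixed and then with $g\in\overline G$ fixed, this yields $g\overline H g^{-1}\subseteq\overline H$ for all $g\in\overline G$. This is exactly the standard proof and it works here because a differential algebraic subgroup of $\GL_n(\U)$ is in particular a subgroup of the abstract group $\GL_n(\U)$ with $\U$ algebraically closed, and because (as you note, again by \cite[Theorem~4.3]{Waterhouse}) the closures $\overline G$ and $\overline H$ are themselves algebraic subgroups. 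The only cosmetic point worth adding is the last step: the inclusion $g\overline H g^{-1}\subseteq\overline H$ for every $g\in\overline G$ already gives equality $g\overline H g^{-1}=\overline H$, since one may apply the same inclusion to $g^{-1}\in\overline G$ and conjugate back; with that remark the normality of $\overline H$ in $\overline G$ is fully established. So your route buys a self-contained elementary proof, at the cost of working with points over $\U$ rather than functorially, which is harmless in the paper's characteristic-zero, differentially closed setting.
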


The proofs of the following two statements were provided to the authors by Phyllis Cassidy.

\begin{lemma}\label{lem:unipotent} The Zariski closure of a unipotent differential algebraic subgroup of
$\GL_n$ is a unipotent algebraic group.
\end{lemma}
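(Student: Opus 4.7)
The plan is to invoke Lemma~\ref{unipotentlemma}\eqref{unipotentlemma2} is tempting (passing ``no elements of finite order $>1$'' from $G$ to $\overline G$ would immediately give the analogous algebraic characterization), but that passage is not obvious: an element of the Zariski closure need not be approximable in any topological sense that controls torsion. The smoothest route is instead to use the conjugation characterization, Lemma~\ref{unipotentlemma}(1), which transfers to the algebraic closure essentially for free because $\ST_n$ is already Zariski closed.

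First I would observe that $\ST_n \subset \GL_n$ is cut out by the polynomial conditions that the diagonal entries equal $1$ and the strictly lower-triangular entries equal $0$; hence $\ST_n$ is Zariski closed in $\GL_n$. By Lemma~\ref{unipotentlemma}(1) applied to $G$, there exists $a\in \GL_n(\U)$ with $aGa^{-1}\subseteq \ST_n$.

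Next I would use the fact that conjugation by $a$ is an automorphism of the algebraic group $\GL_n$ and is therefore a homeomorphism in the Zariski topology. Consequently
$$
a\,\overline{G}\,a^{-1} \;=\; \overline{aGa^{-1}} \;\subseteq\; \overline{\ST_n} \;=\; \ST_n,
$$
so $\overline G \subseteq a^{-1}\ST_n\, a$. Since $a^{-1}\ST_n\, a$ is a unipotent linear algebraic group (being a conjugate of the standard unipotent subgroup of $\GL_n$), and closed subgroups of unipotent algebraic groups are unipotent, this shows that $\overline G$ is a unipotent linear algebraic group.

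I do not expect any real obstacle: the entire content of the argument is the compatibility of Zariski closure with conjugation and the fact that $\ST_n$ is already Zariski closed, so the Kolchin-level conjugation hypothesis furnished by Lemma~\ref{unipotentlemma} upgrades directly to a Zariski-level conjugation into the same group $\ST_n$.
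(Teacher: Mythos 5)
Your proposal is correct and follows essentially the same route as the paper: Cassidy's conjugation criterion puts $G$ inside a conjugate of the Zariski closed group $\ST_n$, and since conjugation extends to (is a Zariski homeomorphism on) $\GL_n$, the closure $\overline G$ lands in the same conjugate of $\ST_n$, hence is unipotent. Your extra remarks (that $\ST_n$ is Zariski closed and that closed subgroups of unipotent algebraic groups are unipotent) merely spell out details the paper leaves implicit.
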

\begin{proof} Let $G$ be a unipotent differential algebraic group. There exists $g \in \GL_n$ such that $$gGg^{-1} \subset \ST_n.$$ Let $\overline G$ be the Zariski closure of $G$. Since the conjugation by $g$ extends to $\overline G$, the linear algebraic group $g\overline Gg^{-1}$ is a subgroup of $\ST_n$. Thus, $\overline G$ is unipotent.
\end{proof}

 \begin{theorem} A LDAG $G$ contains a maximal normal unipotent differential algebraic subgroup $\Ru(G)$.
\end{theorem}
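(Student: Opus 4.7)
The plan is to construct $\Ru(G)$ as the intersection of $G$ with the unipotent radical of its Zariski closure $\overline{G}$, transferring the classical existence result from linear algebraic groups to LDAGs via Lemmas~\ref{Zariskinormal} and~\ref{lem:unipotent}. Concretely, since $\Char\K=0$, the linear algebraic group $\overline{G}\subset\GL_n(\U)$ admits a unipotent radical $N:=\Ru(\overline{G})$, i.e. the unique maximal (automatically connected, since unipotent algebraic groups in characteristic zero are connected) normal unipotent algebraic subgroup of $\overline{G}$. Define $\Ru(G):=N\cap G$.

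Next I would check the three properties required of $\Ru(G)$. First, it is a differential algebraic subgroup of $G$: every Zariski closed subset of $\GL_n(\U)$ is Kolchin closed, so $N$ is Kolchin closed, and $N\cap G$ is the intersection of two Kolchin closed subgroups of $\GL_n(\U)$. Second, it is normal in $G$: for $g\in G\subset\overline{G}$ and $h\in N\cap G$, we have $ghg^{-1}\in N$ by normality of $N$ in $\overline{G}$ and $ghg^{-1}\in G$, so $ghg^{-1}\in N\cap G$. Third, it is unipotent: $N\cap G$ is a subgroup of the unipotent algebraic group $N$, so it contains no element of finite order greater than $1$, and hence by Lemma~\ref{unipotentlemma}\eqref{unipotentlemma2} it is a unipotent LDAG.

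It remains to establish maximality. Let $H$ be an arbitrary normal unipotent differential algebraic subgroup of $G$. By Lemma~\ref{Zariskinormal}, the Zariski closure $\overline{H}$ is a normal algebraic subgroup of $\overline{G}$, and by Lemma~\ref{lem:unipotent} it is unipotent. Maximality of $N=\Ru(\overline{G})$ among normal unipotent subgroups of $\overline{G}$ then yields $\overline{H}\subset N$, whence
$$
H\;\subset\;\overline{H}\cap G\;\subset\;N\cap G\;=\;\Ru(G),
$$
as desired. The only non-trivial input is the classical existence of the unipotent radical of $\overline{G}$; the main conceptual obstacle is precisely the passage between the Zariski and Kolchin topologies, which is already packaged by Lemmas~\ref{Zariskinormal} and~\ref{lem:unipotent}, so the remainder of the argument is essentially formal.
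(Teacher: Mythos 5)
Your proposal is correct and follows essentially the same route as the paper: both define $\Ru(G)=\Ru\left(\overline{G}\right)\cap G$ and obtain maximality by passing a normal unipotent differential algebraic subgroup $H$ to its Zariski closure via Lemma~\ref{Zariskinormal} and Lemma~\ref{lem:unipotent}, forcing $\overline{H}\subset\Ru\left(\overline{G}\right)$. The only difference is that you spell out the routine verifications (Kolchin closedness, normality, and unipotence of the intersection via Lemma~\ref{unipotentlemma}\eqref{unipotentlemma2}) which the paper leaves implicit.
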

\begin{proof} Let $\Ru\left(\overline G\right)$ be the unipotent radical of $\overline G$. Recall that $\Ru\left(\overline G\right)$ is the maximal normal unipotent algebraic subgroup of $\overline G$.\footnote{Many authors add the requirement of connectedness to the definition of the unipotent radical. However, this holds automatically if the ground field has characteristic zero \cite[Corollary 8.5]{Waterhouse}.} Let $$K = \Ru\left(\overline G\right)\cap G.$$ Then, $K$ is a normal unipotent differential algebraic subgroup of $G$. Let $H$ be a normal unipotent differential
algebraic subgroup of $G$. By Lemma~\ref{Zariskinormal}, $\overline H$ is normal in $\overline G$. Since $\overline H$ is unipotent, $\overline H \subset \Ru\left(\overline G\right)$, and
$$
H \subset \Ru\left(\overline G\right)\cap G = K.
$$
Thus, $K$ is the maximal normal unipotent differential algebraic subgroup of $G$.
\end{proof}

\begin{definition}The subgroup $\Ru(G)$ is called the {\it unipotent radical} of $G$.
\end{definition}

\subsection{Reductive linear differential algebraic groups}\label{reductivesection}
We are now ready to introduce and study reductive LDAGs.

\begin{definition} A linear differential algebraic group $G$ is called {\it reductive} if its unipotent radical is trivial, that is, $\Ru(G) = \{e\}$.
\end{definition}

\begin{remark}
By Remark \ref{unipotentremark}, reductivity of a LDAG $G$ does not depend on its faithful representation.
\end{remark}

Recall that an additive category is called {\it semisimple} if, for every object $V$
and subobject $W\subset V$, there exists a subobject $U \subset V$ such
that $V = W\oplus U$. Since $\Char\K = 0$, the category of representations of a reductive algebraic group is semisimple \cite[Chapter~2]{SpringerInv}.

\begin{theorem}\label{semisimplethm} Let $G \subset \GL(V)$ be a LDAG over $\U$.
Then the category $\Rep_G$ is semisimple if and only if
$G$ is conjugate in $\GL(V)(\U)$ to a subgroup $$H\subset \GL(V)(C)$$ and $H$ is a
 reductive linear algebraic group, where $C$ is the field of constants of $\U$.
\end{theorem}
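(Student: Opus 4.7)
The plan is to prove each direction separately. The ``if'' direction is immediate from semisimplicity of algebraic representations of a reductive group in characteristic zero. The ``only if'' direction extracts endomorphisms $B_i$ from split prolongation sequences, integrates a compatible PDE system to produce a conjugating matrix $P$, and then checks that $P^{-1}GP$ lies in the constants and is reductive.

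For the ``if'' direction, suppose $G$ is conjugate in $\GL(V)(\U)$ to $H\subset\GL(V)(C)$ with $H$ a reductive LAG. Replacing $G$ by its conjugate, I assume $G=H$. A differential polynomial map $H\to\GL(W)$ is given by polynomials in the matrix entries of elements of $H$ and their derivatives; but those entries lie in $C$, so every such derivative vanishes. Hence every differential representation of $H$ is in fact algebraic, so $\Rep_H$ coincides with the category of algebraic representations of $H$, which is semisimple by \cite[Chapter~2]{SpringerInv}.

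For the ``only if'' direction, assume $\Rep_G$ is semisimple. I first argue that $\overline G$ is reductive. Every $V\in\Rep_G$ is completely reducible, hence extends to an $\overline G$-module by Theorem~\ref{prolongation}. Any $\overline G$-module is then completely reducible as a $G$-module, and every $G$-submodule of a rational $\overline G$-module is $\overline G$-invariant by Zariski density of $G$ in $\overline G$; thus $\Rep_{\overline G}$ is semisimple and $\overline G$ is reductive. Next, semisimplicity splits each prolongation sequence~\eqref{eq:prolongation}. Applied to the faithful representation $V$, a $G$-equivariant splitting $s_i:V\to F_i(V)$ takes the form $v\mapsto(B_iv,v)$ with $B_i\in\End(V)$, and equivariance translates to
\begin{equation*}
\partial_iA_g=[B_i,A_g]\quad\text{for all }g\in G\text{ and }1\Le i\Le m,
\end{equation*}
where $A_g$ denotes the action of $g$ on $V$. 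Comparing iterated splittings of $F_i(F_j(V))$ and $F_j(F_i(V))$ via the isomorphisms $S_{i,j}$ of~\eqref{eq:commutingreps} yields the integrability conditions $\partial_iB_j-\partial_jB_i+[B_i,B_j]=0$. Since $\U$ is differentially closed, there exists $P\in\GL(V)(\U)$ with $\partial_iP=B_iP$ for every $i$. A direct computation then gives $\partial_i(P^{-1}A_gP)=P^{-1}(-B_iA_g+[B_i,A_g]+A_gB_i)P=0$ for all $g$ and $i$, so $P^{-1}GP\subset\GL(V)(C)$. Because a Kolchin-closed subgroup landing in $\GL(V)(C)$ is automatically cut out by polynomial equations, $H:=P^{-1}GP$ is an LAG, and $\overline H=P^{-1}\overline GP$ inherits reductivity from $\overline G$.

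The main obstacle is ensuring that the $B_i$ can be chosen to satisfy the Maurer--Cartan integrability. A splitting $s_i$ of a single $F_i(V)$ is unique only modulo $\End_G(V)$, so independent choices of $B_i$ for each $i$ need not be compatible. My plan is to construct the splittings as a tensor-compatible natural transformation $\Id\to\At_{\Cat,i}$ on the whole of $\Rep_G$, using the Tannakian formalism and the functoriality of the isomorphisms $S_{i,j}$ from~\cite{OvchTannakian,OvchRecoverGroup}; the integrability conditions should then fall out of this functoriality applied to $F_i(F_j(V))$ and $F_j(F_i(V))$.
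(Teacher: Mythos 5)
Your ``if'' direction is fine and is essentially the paper's: once the group consists of constant matrices, its differential representations are polynomial, extend to the reductive Zariski closure by density, and semisimplicity follows from classical LAG theory (the paper compresses this into citations of \cite[Chapter~2]{SpringerInv} and \cite[Propositions~2 and~3]{OvchRecoverGroup}).

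The ``only if'' direction, however, has a genuine, load-bearing gap, and you have located it yourself but not closed it. The paper does not integrate the system $\partial_iP=B_iP$ by hand: it observes that semisimplicity makes each $F_i(V)$ completely reducible and then invokes \cite[Proposition~3]{OvchRecoverGroup} (in its several-derivations form) to get the conjugation into $\GL(V)(C)$ in one stroke; reductivity of the constant group then comes from the corresponding classical statement. You instead re-derive that proposition from the splittings $v\mapsto(B_iv,v)$ of \eqref{eq:prolongation}, which requires the compatibility of the $B_i$'s. For $m=1$ no integrability condition is needed and your computation is complete, but for several derivations the step ``since $\U$ is differentially closed, there exists $P$ with $\partial_iP=B_iP$ for all $i$'' is exactly what must be justified: solvability needs the Maurer--Cartan relation (note also your sign is off for the left system $\partial_iP=B_iP$; the correct condition is $\partial_iB_j-\partial_jB_i=[B_i,B_j]$). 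Your own setup shows why this is not automatic: differentiating $\partial_jA_g=[B_j,A_g]$ by $\partial_i$ and using Jacobi only gives that the defect $\partial_iB_j-\partial_jB_i-[B_i,B_j]$ centralizes $\rho(G)$, i.e.\ lies in $\End_G(V)$, and each $B_i$ is only determined modulo $\End_G(V)$, so an arbitrary choice can fail the condition. The proposed fix --- building the splittings as a tensor-compatible natural transformation on all of $\Rep_G$ and extracting integrability from the functoriality of the isomorphisms \eqref{eq:commutingreps} --- is plausibly the right idea (it is in effect what the cited proposition encapsulates), but as written it is a plan, not an argument: you would have to show such a coherent family of splittings exists (semisimplicity splits each sequence, but coherence across $\otimes$, duals, and the $S_{i,j}$'s is an extra construction), or argue by induction on the derivations while preserving constancy with respect to the earlier ones. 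Until that step is supplied or replaced by the citation, the ``only if'' direction is not proved.
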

\begin{proof}
Let $G$ be a LDAG with $\Rep_G$ semisimple
and $V$ be its faithful representation. Since $V$ is a subrepresentation of $F_i(V)$, $1\Le i\Le m$,
the latter $G$-module is not irreducible. According to  \cite[Propositions~3]{OvchRecoverGroup}, if the $G$-module $F_i(V)$ is completely reducible, then $G$ is conjugate
to a group of matrices with constant entries with respect to $\partial_i$, $1\Le i \Le m$. Therefore, this direction follows from
the corresponding statement about linear algebraic groups \cite[Chapter 2]{SpringerInv}.

 Now, if $G$ is conjugate to the group $H$ of matrices with constant entries and  $H$ is reductive, the statement follows again from the representation theory
 of linear algebraic groups \cite[Chapter 2]{SpringerInv} and \cite[Propositions~2 and~3]{OvchRecoverGroup}.
  \end{proof}

\begin{definition}\cite[page 222]{CassidyClassification}
A connected differential algebraic group is called {\it simple} if it is non-commutative and has no nontrivial
connected normal differential algebraic subgroups.
\end{definition}
\begin{definition}\cite[page 222]{CassidyClassification}
A connected differential algebraic group is called {\it semisimple} if it has no nontrivial
connected normal commutative differential algebraic subgroups.
\end{definition}

The following example was suggested to the authors by Phyllis Cassidy.

\begin{example}\label{sl2example} Let $G = \SL_n$ over the ground field $(\U,\:')$. Consider the action of $G$ on its Lie algebra $\sl_n$
by the adjoint representation:
$$
A \mapsto gAg^{-1},
$$
where  $g \in G(\U)$ and $A \in \sl_n(\U)$. Let $H$ be the algebraic group $\SL_n\ltimes\sl_n$,
where by $\sl_n$ we mean the additive group of the Lie algebra, and the multiplication
is given by
$$
(g_1,X_1)\cdot(g_2,X_2) = \left(g_1g_2,X_1+g_1X_2g_1^{-1}\right).
$$ The subgroup $\SL_n\times \{0\}$ is a maximal semisimple algebraic subgroup, and the unipotent radical is $\{0\}\times\sl_n$. So, $H$ is not reductive. Let
$$G' = \left\{\left(g,g'g^{-1}\right)\big|\:g \in \SL_n(\U)\right\}.$$ $G'$ is a differential
algebraic subgroup of $H$ and is differentially rationally isomorphic to $\SL_n$ and
so is a non-commutative simple differential algebraic group, whose Zariski closure, $H$, is not even
reductive.
\end{example}

\begin{remark}
It is not surprising that it was enough
to prolong the usual representation of $\SL_n$ {\it once} to show
that the representations do not split: the group is conjugate
to constants if and only if the {\it first} prolongation splits.
\end{remark}

\section{Main result}\label{uniqueness}
In this section, we will show in Theorem~\ref{main} that the Zariski closure
of a reductive LDAG in its faithful representation of minimal dimension is
unique up to an isomorphism and is a reductive linear algebraic group. As Example~\ref{ex:nonfaithful} shows, the uniqueness fails without the hypothesis on minimal dimension.
\subsection{Preparation}

\begin{definition}
A group $\Gamma$ is a \emph{product} of its subgroups $M$ and $N$ if the product morphism $M\times N\to \Gamma$ is surjective and $mn=nm$ for all $m\in M$, $n\in N$. In this case, we write $$\Gamma=MN.$$ If $|M\cap N|<\infty$, we say that $\Gamma$ is an \emph{almost direct product} of $M$ and $N$, and we write $$\Gamma=M\cdot N.$$
\end{definition}

We will use the following result on the structure of reductive algebraic groups. For a LAG $G$, we denote  its commutator subgroup by $G'$. The connected component of $G$ is denoted by $G^\circ$. For a subgroup $H \subset G$, its centralizer in $G$ is denoted by $C_G(H)$. The center of $G$ is denoted by $Z(G)$  (for the definitions, see \cite[Chapters 6 and 10]{Waterhouse}). If $G$ is connected, so is $G'$ \cite[Theorem 10.1]{Waterhouse}. In what follows, we will be frequently referring to \cite{Borel}, where the notation is different. However, we choose the one that we have here as it is more commonly used. Our references to the results from~\cite{Borel} are very explicit to avoid possible confusion. 

\begin{proposition}\label{App2}
Let $H\subset\GL_n(\U)$ be a connected reductive algebraic subgroup and $N\subset H$ be a normal algebraic subgroup. Then $N$ is reductive and
\begin{enumerate}
 \item\label{App21} $H=H'\cdot T$, the commutator subgroup $H'$ is semisimple and $T=Z(H)^\circ$ is an algebraic torus.
 \item\label{App22} $H=NC_H(N)$.
 \item\label{App23} If $A\subset H$ is a normal algebraic subgroup and $B=C_H(A)$, then $$N=(N\cap A)(N\cap B).$$
\end{enumerate}
\end{proposition}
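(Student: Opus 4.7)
The plan is to settle the three parts in order of increasing intricacy, using the decomposition of a connected reductive group into an almost direct product of its almost simple factors and a central torus as the common thread. The reductivity of $N$ comes first: the unipotent radical $\Ru(N^\circ)$ is characteristic in $N^\circ$ and hence normal in $H$, so being connected and unipotent it must lie in $\Ru(H)=\{e\}$. Part (1) is the standard Borel structure theorem for connected reductive groups in characteristic zero, which I would cite directly from \cite[Ch.~IV]{Borel} rather than reprove.

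For (2), I would invoke (1) to write $H=H'\cdot Z(H)^\circ$ and further decompose $H'=H_1\cdots H_s$ as an almost direct product of almost simple normal factors. The derived subgroup $(N^\circ)'$ is semisimple, connected, and characteristic in $N^\circ$, hence normal in $H$; by the classification of normal connected subgroups of $H'$ it equals an almost direct product $\prod_{i\in I}H_i$ for some $I\subset\{1,\ldots,s\}$, and the complementary factors $H_j$, $j\notin I$, centralize it by the almost direct product structure. Separately, $Z(N^\circ)^\circ$ is a normal torus in the connected group $H$, so by the rigidity of tori it is central, i.e.\ $Z(N^\circ)^\circ\subset Z(H)^\circ$. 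Consequently $C_H(N^\circ)\supset Z(H)^\circ\cdot\prod_{j\notin I}H_j$, and combined with $N^\circ\supset\prod_{i\in I}H_i\cdot Z(N^\circ)^\circ$ this exhausts $H$. To upgrade $H=N^\circ\cdot C_H(N^\circ)$ to $H=N\cdot C_H(N)$, I would use that $N/N^\circ$ is a finite normal subgroup of the connected quotient $H/N^\circ$, hence central in it; a short coset manipulation absorbs the component-group contribution into $N$.

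For (3), applying (2) to $A$ already gives $H=AB$. I would then use (2) applied to each of $A$, $B$, and $N$ separately to obtain, up to finite central intersections, decompositions $A^\circ=\prod_{i\in I_A}H_i\cdot T_A$, $B^\circ=\prod_{i\notin I_A}H_i\cdot T_B$, and $N^\circ=\prod_{i\in I_N}H_i\cdot T_N$, where $T_A,T_B\subset Z(H)^\circ$ are subtori generating $Z(H)^\circ$ (the index set for $B$ is forced to be the complement of $I_A$ because an almost simple $H_j$ cannot centralize itself). Splitting $I_N$ as $(I_N\cap I_A)\sqcup(I_N\cap I_B)$ and matching $T_N$ against the $T_A$--$T_B$ splitting yields $N^\circ=(N^\circ\cap A)(N^\circ\cap B)$, and the same component-group reduction as in (2) promotes this to $N=(N\cap A)(N\cap B)$.

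The principal obstacle will be the bookkeeping in (3): normal connected subgroups of a connected reductive group are only \emph{almost} direct products of simple factors and subtori, so the finite central intersections and (since $N$, $A$, $B$ are not assumed connected) the component groups have to be tracked carefully to avoid spurious identifications. Ensuring that the splitting $Z(H)^\circ=T_A\cdot T_B$ is compatible with the torus part $T_N$ of $N^\circ$, and that this compatibility survives when one replaces the identity components by the full groups, is the technical heart of the argument.
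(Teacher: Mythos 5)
Your overall strategy is the paper's: decompose the connected reductive $H$ into an almost direct product of almost simple factors and the central torus, show that each simple factor either lies in the normal subgroup or centralizes it, and handle the central torus by rigidity. The reductivity of $N$, statement (1), and the identity $H=N^\circ C_H\left(N^\circ\right)$ are fine as you argue them. The genuine gap is the passage from $N^\circ$ to the possibly disconnected $N$. Knowing that $N/N^\circ$ is central in $H/N^\circ$ is strictly weaker than what is needed, and no ``coset manipulation'' closes the distance: from $H=N^\circ C_H\left(N^\circ\right)$ one gets $N=N^\circ\left(N\cap C_H\left(N^\circ\right)\right)$, and the desired $H=NC_H(N)$ reduces to showing that $C_H\left(N^\circ\right)$ equals $\left(N\cap C_H\left(N^\circ\right)\right)\,C_{C_H\left(N^\circ\right)}\left(N\cap C_H\left(N^\circ\right)\right)$ --- a statement of exactly the same type, not a formal consequence of centrality mod $N^\circ$. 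What is actually required (and what the paper proves, via the isogeny $H_1\times\cdots\times H_k\times T\to H$ and the fact that the finite projections of the preimage of $N$ are central in the simple factors) is the stronger containment $N\subset N^\circ Z(H)$; this gives $C_H(N)=C_H\left(N^\circ\right)$ and the upgrade is then immediate. This containment uses reductivity in an essential way, so some structural argument must be supplied; for instance, note that $\left(N\cap C_H\left(N^\circ\right)\right)^\circ\subset Z\left(N^\circ\right)^\circ\subset Z(H)$, that $H$ acts trivially on the finite component group of the normal subgroup $N\cap C_H\left(N^\circ\right)$, and that for $x$ in that subgroup the map $h\mapsto[h,x]$ is then a homomorphism into a central torus which must vanish on $H'$ and on $Z(H)^\circ$, hence on $H=H'\cdot Z(H)^\circ$, forcing $x\in Z(H)$. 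Your sketch never confronts this point, and it is precisely where the work lies.

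The same omission undermines your plan for (3), and there your torus bookkeeping is moreover misdirected. First, to say that the simple factors of $B=C_H(A)$ are exactly those indexed by the complement of $I_A$ you need $H_j$ (for $j\notin I_A$) to centralize all of $A$, not just $\left(A^\circ\right)'$ and $T_A$; this again requires $A\subset\left(A'\right)^\circ Z(H)$, i.e.\ the containment discussed above applied to $A$. Second, a splitting $Z(H)^\circ=T_A\cdot T_B$ compatible with $T_N$ need not exist: a subtorus of $\Gm^2$ such as the diagonal meets two complementary coordinate subtori trivially, so ``matching $T_N$ against the $T_A$--$T_B$ splitting'' cannot work in general. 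It is also unnecessary: since $Z(H)\subset C_H(A)=B$, the whole central torus --- and, once one has $N\subset\left(N'\right)^\circ Z(H)$, the entire excess of $N$ over $\left(N'\right)^\circ$, including the non-identity components --- lands in $N\cap B$ automatically. This is how the paper avoids what you call the ``technical heart'': one only splits the semisimple part $\left(N'\right)^\circ$ among the simple factors of $H'$, and absorbs everything central into $N\cap B$, rather than trying to distribute the torus between $A$ and $B$.
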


\begin{proof}
By \cite[Corollary IV.14.2(b)]{Borel}, $N^\circ$ is reductive, and, therefore, by \cite[Corollary 8.5]{Waterhouse}, so is $N$. Statement~\eqref{App21} now follows from \cite[Proposition IV.14.2(2,3)]{Borel} and \cite[Proposition IV.11.21]{Borel}.

By statement \eqref{App21} and \cite[Proposition IV.14.10(2b)]{Borel}, there is an isogeny
$$
\alpha\colon \tilde{H}=H_1\times\cdots\times H_k\times T\to H,
$$
where $H_i\subset H'$, $i=1,\ldots,k$, are connected simple normal subgroups of positive dimension.
Since $N\subset H$ is normal, so is the preimage $$\tilde{N}=\alpha^{-1}(N)\subset\tilde{H}.$$ Let $N_i$, $1\Le i\Le k$, be the image of $\tilde{N}$ under the projection $\alpha_i\colon\tilde{H}\to H_i$. We have $$\tilde{N}\subset\left(\prod N_i\right)\times T.$$ Since $N_i$ is normal in $H_i$, we have either $N_i=H_i$ or $N_i$ is finite. By \cite[Proposition IV.14.10(2c)]{Borel}, $\left(\tilde{N'}\right)^\circ\subset \tilde{H'}$ is a product of some $H_i$'s. Hence, if $N_i=H_i$, then $N_i\subset \left(\tilde{N'}\right)^\circ$. Denote the product of all finite $N_i$'s by $\Gamma$. Thus, we have 
\begin{equation}
\label{eq_normal0}
\tilde{N}\subset \left(\tilde{N'}\right)^\circ\times \Gamma\times T.
\end{equation}
By \cite[Lemma V.22.1(i,vi)]{Borel}, $\Gamma\subset Z(\tilde{H})$. Since $T\subset Z(\tilde{H})$, we have
\begin{equation}
\label{eq_normal1}
\tilde{N}\subset\left(\tilde{N'}\right)^\circ Z\left(\tilde{H}\right)\subset\tilde{N}^\circ Z\left(\tilde{H}\right).
\end{equation}
Since $\alpha$ is onto, $$\alpha\left(Z\left(\tilde{H}\right)\right)\subset Z(H).$$ Therefore, applying $\alpha$ to formula \eqref{eq_normal1}, we obtain
\begin{equation}
\label{eq_normal2}
N\subset N^\circ Z(H).
\end{equation}
Thus, $C_H(N^\circ)=C_H(N)$ and \cite[Proposition IV.14.10(1b)]{Borel} implies $$H=N^\circ C_{H}(N^\circ)=NC_H(N).$$ This proves statement~\eqref{App22}.

Let us prove statement~\eqref{App23}. By statement \eqref{App22}, we have $$H=AB.$$ It follows from the first inclusion in~\eqref{eq_normal0} (applied for $N=A$) that 
\begin{equation}
\label{eq_normal3}
A\subset \left(A'\right)^\circ Z(H).
\end{equation}
If $(A')^\circ\subset H'$ does not contain a simple normal subgroup $S\subset H'$, then $(A')^\circ$ and $S$ commute \cite[Proposition IV.14.10(2b,2c)]{Borel}.
Hence, if a simple normal subgroup $S\subset H$ does not belong to $A$, it belongs to  $C_H(A')^\circ$, which is equal to $C_H(A)$ (that is, $B$) by~\eqref{eq_normal3}.
Therefore, since $N_c:=(N')^\circ$ is a normal connected subgroup of  semisimple  $H'$, by \cite[Proposition IV.14.10(2c)]{Borel}, it is a product of simple normal subgroups of $H'$, and we have $$N_c=(N_c\cap A)(N_c\cap B).$$ It follows from the decomposition in statement~\eqref{App22} that the center of any normal subgroup of $H$ belongs to $Z(H)$. Then, by statement~\eqref{App21}, we have $$N^\circ=N_cZ\left(N^\circ\right)\subset N_cZ(H).$$ Now, \eqref{eq_normal2} implies that $$N\subset N_cZ(H).$$ Since $Z(H)\subset B$, $$N\subset N_cB$$ and
$$
N=N_c(N\cap B)=(N_c\cap A)(N\cap B)=(N\cap A)(N\cap B),
$$
which finishes the proof.
\end{proof}

For a LDAG $G\subset\GL_n(\U)$, we set $$G(C)= G\cap\GL_n(\C)$$
and call $G(C)$ a \emph{group of constants} of $G$. Isomorphic LDAGs do not necessarily have isomorphic groups of constants. (For instance, the homomorphism $\Ga(C)\to\Ga$, $x\mapsto ux$, where $u\in\Ga\setminus\Ga(C)$, has the image with the zero group of constants.) A \emph{Chevalley group} is a connected {\it noncommutative} semisimple linear algebraic group defined over $\QX$. 
We will use the following results of Cassidy restated for our situation.

\begin{theorem}\label{Cassidy} We have:
\begin{enumerate}
 \item\label{Cassidy1}\cite[Proposition 31]{Cassidy} Let $G$ be a dense differential algebraic subgroup of $\Gm^n$. Then $G$ contains $$(\Gm^n)(C).$$
 \item\label{Cassidy2}\cite[Theorem 19]{CassidyClassification} Let $G$ be a connected Zariski dense differential algebraic subgroup of a simple Chevalley group $S$. Then either $G=S$ or $G$ is conjugate in $S$ to $S(C')$, where $C \subset C' \subset \U$.
 \item\label{Cassidy3}\cite[Theorem 15]{CassidyClassification} Let $G$ be a dense differential algebraic subgroup of a  connected noncommutative semisimple LAG $H$. Then $G$ is an almost direct product of its connected simple normal subgroups $G_i$. The Zariski closure $$\overline{G_i}\subset H$$ is a connected simple normal algebraic subgroup.
\end{enumerate}
\end{theorem}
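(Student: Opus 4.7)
Since this theorem collects three classical results of Cassidy whose full proofs appear in~\cite{Cassidy,CassidyClassification}, the plan is to sketch the key ideas rather than reconstruct the arguments in detail.

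For part~\eqref{Cassidy1}, the plan is to use the logarithmic derivative map $\ell\delta \colon \Gm^n \to \Ga^{mn}$ sending $g$ to $(\partial_i g_j/g_j)_{i,j}$, whose kernel is precisely $(\Gm^n)(C)$. Kolchin-closed subgroups of $\Gm^n$ are parametrized by systems of homogeneous linear differential conditions on the image of $\ell\delta$ together with possibly algebraic conditions on the coordinates $g_j$. Zariski density of $G$ in $\Gm^n$ forces any algebraic condition to be trivial, so the conditions cutting out $G$ involve only derivatives of the $g_j$; since constants annihilate every such condition, $(\Gm^n)(C) \subset G$.

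For part~\eqref{Cassidy2}, the plan exploits the rigidity of simple Chevalley groups. One reduces to analyzing the differential Lie algebra $\mathfrak{g} \subset \mathfrak{s} := \operatorname{Lie}(S)$ as a Zariski dense differential subalgebra. Simplicity of $\mathfrak{s}$ combined with Kolchin-style density arguments forces $\mathfrak{g}$ to be either all of $\mathfrak{s}$ or the scalar extension from a $C'$-form of $\mathfrak{s}$ over some intermediate differential subfield $C \subset C' \subset \U$. Integrating and applying descent then yields conjugacy of $G$ with $S(C')$. The main obstacle is identifying $C'$ intrinsically as a field of definition and ruling out non-standard intermediate subgroups beyond those of the form $S(C')$.

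For part~\eqref{Cassidy3}, the plan is to combine the almost-direct product decomposition of $H$ into simple Chevalley factors $H_1,\ldots,H_k$ (via Proposition~\ref{App2}) with part~\eqref{Cassidy2} applied to the projection of $G$ to each factor, which is itself Zariski dense. The hard part will be ruling out ``diagonal'' subgroups in which a connected simple piece of $G$ projects densely to two or more $H_i$'s simultaneously, which would prevent $G$ from being an almost direct product of its own simple normal subgroups. This requires a Goursat-type argument: such a diagonal would force an isomorphism between the involved simple factors of $H$ together with a compatibility of the constant subfields produced in part~\eqref{Cassidy2}, after which the remaining analysis extracts exactly the simple normal differential algebraic subgroups $G_i \subset G$ whose Zariski closures are the $H_i$.
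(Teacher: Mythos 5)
The paper itself offers no proof of this theorem: it is stated as a quotation of Cassidy's results (\cite[Proposition~31]{Cassidy} and \cite[Theorems~15 and~19]{CassidyClassification}), so there is no internal argument to compare yours with. Judged on its own, your sketch for part~(1) is the standard and correct one: Cassidy's description of differential algebraic subgroups of $\Gm^n$ by character equations together with linear differential conditions on logarithmic derivatives, Zariski density eliminating the character conditions, and the constants lying in the kernel of the logarithmic derivative, whence $(\Gm^n)(C)\subset G$.

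For parts~(2) and~(3), however, what you give is a plan whose decisive steps are explicitly deferred, and those steps are precisely the nontrivial content of Cassidy's theorems. In~(2), the assertion that Zariski density plus simplicity forces the associated differential algebraic Lie algebra to be either all of $\operatorname{Lie}(S)$ or a form over a constant subfield $C'$, and that this ``integrates'' to conjugacy of $G$ with $S(C')$, is exactly what must be proved; it requires the Chevalley-basis and logarithmic-derivative analysis that occupies most of \cite{CassidyClassification}, and you acknowledge but do not close this gap. In~(3), the Goursat-type elimination of diagonal subgroups is a genuine gap rather than a routine verification: in the Kolchin topology, graphs of differential (non-algebraic) maps can be Zariski dense --- for instance $\{(x,\partial x)\}\subset\Ga^2$ --- so excluding dense diagonal-type subgroups over two simple factors requires a rigidity statement to the effect that differential homomorphisms between simple algebraic groups are essentially algebraic, which you do not supply. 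Note also that in \cite{CassidyClassification} the almost-direct-product decomposition (Theorem~15) is established independently of, and before, the classification result (Theorem~19), so your proposed derivation of (3) from (2) reverses the logical order and stands or falls with the unproved Goursat step; as written, the proposal is a reasonable reading guide to Cassidy's papers but not a proof.
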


\begin{definition}\label{defproper}
A LDAG $G\subset\GL_n(\U)$ is called \emph{constant-dense}, if there is a LAG $G_0\subset\GL_n(\C)$ such that $$G_0\subset G\subset\overline{G_0}.$$
\end{definition}

Lemma~\ref{proper} will play a crucial role in the proof of Theorem \ref{main}, which is our main result.

\begin{lemma}\label{proper}
The isomorphism class of any connected reductive LDAG contains a constant-dense group.
\end{lemma}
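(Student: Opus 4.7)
The plan is to exhibit, via a change of faithful representation, a constant-dense copy of $G$ by first reducing to the case where $\overline{G}$ is a reductive LAG defined over $\C$, then combining Cassidy's three structure theorems on the semisimple and toric constituents of $\overline{G}$.

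Starting with any faithful embedding $G \subset \GL_n(\U)$, set $H = \overline{G}$. The intersection $\Ru(H) \cap G$ is a normal unipotent LDAG subgroup of $G$, hence trivial by reductivity of $G$. A Levi decomposition $H = L \ltimes \Ru(H)$, available in characteristic zero, gives an algebraic projection $\pi \colon H \to L$ that restricts to a faithful differential rational homomorphism $G \to L$, hence by \cite[Proposition~8]{Cassidy} to an isomorphism of $G$ onto a Zariski dense LDAG of the connected reductive $L$. Because $\U$ is algebraically closed of characteristic zero, $L$ is split and isomorphic as an algebraic group to $L_0 \times_{\C} \U$ for some connected reductive $L_0 \subset \GL_N(\C)$. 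A further change of representation, given by the embedding of $L$ into $\GL_N(\U)$ induced by a faithful $\C$-representation of $L_0$, lets us assume $G \subset L_0(\U) = \overline{G}$ with $L_0$ defined over $\C$.

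Next, decompose $L_0 = L_0' \cdot T_0$ as in Proposition~\ref{App2}\eqref{App21}, where $L_0'$ is semisimple and $T_0 = Z(L_0)^\circ$ is a central torus. Since $L_0'$ is semisimple over $\C$, it is an almost direct product of simple Chevalley subgroups $L_{0,1}, \ldots, L_{0,k}$. The commutator $[G,G]$ is a connected semisimple LDAG Zariski dense in $L_0'(\U)$, so Theorem~\ref{Cassidy}\eqref{Cassidy3} decomposes it as an almost direct product $G_1 \cdots G_k$ of simple LDAGs, each Zariski dense in the corresponding $L_{0,i}(\U)$. By Theorem~\ref{Cassidy}\eqref{Cassidy2}, after conjugating by a suitable element of $L_{0,i}(\U)$, we have $G_i \supset L_{0,i}(\C)$; since the $L_{0,i}$'s pairwise commute, the simultaneous conjugation can be effected by a single element of $L_0'(\U)$, yielding $[G, G] \supset L_0'(\C)$ after this change of representation.

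For the toric part, the quotient map $q \colon L_0 \to L_0/L_0'$ sends $G$ onto a Zariski dense LDAG in the quotient torus, which contains the constant points of the quotient by Theorem~\ref{Cassidy}\eqref{Cassidy1}. Combined with $L_0'(\C) \subset [G,G] \subset G$ and with the surjectivity of $L_0(\C) \to (L_0/L_0')(\C)$, a coset-lifting argument shows $L_0(\C) \subset G$. Setting $G_0 := L_0 \subset \GL_N(\C)$ then gives $G_0 \subset G \subset \overline{G_0}$, so $G$ is constant-dense in this final representation. The principal obstacle is this last lifting step: promoting constant classes in the abelianization $L_0/L_0'$ to actual constant elements of $G$, where the key input is that the already-established inclusion $L_0'(\C) \subset [G,G]$ lets one adjust any lift of a constant abelianization class modulo $[G,G]$ to land inside $L_0(\C) \cap G$.
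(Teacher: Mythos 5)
Your opening reduction (project along $\overline G\to\overline G/\Ru(\overline G)$, injective on $G$ because $G$ is reductive, then pass to a $\C$-form of the reductive closure) and your treatment of the semisimple part run parallel to the paper. The genuine gap is exactly where you yourself locate ``the principal obstacle'': the coset-lifting step does not work as described, and it is the heart of the lemma. Given $x\in L_0(\C)$ and a lift $g\in G$ with $q(g)=q(x)$ in the quotient torus, you have $g=xs$ with $s\in L_0'(\U)$; to correct $g$ into a constant element of $G$ you would need some $u\in[G,G]$ with $su\in L_0'(\C)$, i.e.\ you would need $s\in L_0'(\C)\,[G,G]=[G,G]$. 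But nothing places $s$ in $G$ at all, let alone in $[G,G]$: by Cassidy's classification (Theorem~\ref{Cassidy}\eqref{Cassidy2}), $[G,G]$ is in general conjugate to the group of $C'$-points of $L_0'$ for an intermediate field of constants $C'$, and a non-constant $s\in L_0'(\U)$ cannot then be absorbed; the inclusion $L_0'(\C)\subset[G,G]$ gives no purchase on it. So no constants in the central-torus direction are ever produced inside $G$, and the concluding containment $L_0(\C)\subset G$ (or any Zariski dense constant subgroup of $G$) remains unproved.

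The paper's proof circumvents precisely this point with two ingredients missing from your proposal. First, it does not argue inside the almost direct product $L_0=L_0'\cdot T_0$: using Steinberg and Demazure it replaces $H$ by an isogenous \emph{direct} product $\tilde H=H_1\times\cdots\times H_r\times T$ of simply connected Chevalley groups and a torus (this also repairs your unaddressed point that Theorem~\ref{Cassidy}\eqref{Cassidy2} is stated for $\QX$-forms), so every element splits uniquely as $g=lz$ and there are genuine projections onto $H'$ and $T$. Second, and crucially, it proves the identity $q(G)=G'$ for the projection $q$ onto the semisimple part, via Theorem~\ref{Cassidy}\eqref{Cassidy3} applied to $q(G)$, normality of the $G_i$ in the simple components $Q_i$ of $q(G)$, and the fact that $S(C')$ is not normal in a simple $S$. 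This identity is what replaces your impossible lifting: for $g=lz$ one gets $l\in q(G)=G'\subset G$, hence $z=l^{-1}g\in G\cap T$, so Theorem~\ref{Cassidy}\eqref{Cassidy1} gives $G\cap T\supset T(C)$ and then $hGh^{-1}\supset\tilde H(C)$; the constants are finally pushed down along the isogeny, which is surjective on $C$-points because $C$ is algebraically closed. Without a substitute for $q(G)=G'$ (or for the passage to the simply connected cover), your argument does not close.
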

\begin{proof} Let $G \subset  \GL_n(\U)$ be a connected reductive LDAG. Then the restriction of the homomorphism
$$
\overline{G} \to \overline{G}/\Ru(\overline{G}) =: H
$$
to $G$ is injective, because, by definition, $\Ru(\overline{G})\cap G = \{e\}$. Moreover, $H$ is a reductive linear algebraic group.
The above also follows from the first part of the proof of Theorem~\ref{main} without introducing a circular argument. Therefore, every connected reductive LDAG is isomorphic to a LDAG whose Zariski closure is a reductive linear algebraic group.

To prove the main statement, first consider the case
$$
H=H_1\times H_2\times\cdots\times H_r\times T,
$$
where $H_i$, $1\Le i\Le r$, are simple Chevalley groups and $T=\Gm^k$.
We will show that
$$
G=G_1\times G_2\times\cdots\times G_r\times Z,
$$
where $G_i\subset H_i$, $1\Le i\Le r$, $Z\subset T$, are dense subgroups.
Since the commutator subgroup $H'$ is closed in $H$, $$L=\overline{G'}\subset H'$$ and $L$ is a normal subgroup of $H$. The quotient map $$p\colon H\to H/L$$ is a homomorphism of algebraic groups. Hence, it takes dense subgroups to dense ones. Note that $p(G)$ is commutative, because $L\supset G'$. Then $H/L$ is so. Therefore, $L\supset H'$. Thus, $$L=H'.$$
Since $G'$ is semisimple, it follows from Theorem \ref{Cassidy}\eqref{Cassidy3} that $$G'=G_1\times G_2\times\cdots\times G_r,$$ where $\overline{G_i}=H_i$. By Theorem~\ref{Cassidy}\eqref{Cassidy2}, there is $h\in H'$ such that
$$
hG'h^{-1}\supset H'(C).
$$
Consider the projection $$q\colon H=H'\times T\to H'.$$ Since $G'$ is normal in $G$, $q(G')=G'$ is normal in $q(G)$. Then $$G'=q(G).$$ Indeed, $q(G)$ is a connected dense differential algebraic subgroup of $H'$. Therefore, $$q(G)=\prod_{i=1}^rQ_i,$$ where $Q_i$'s are the simple components. Then $G_i$ is a normal subgroup of $Q_i$. Since either of $G_i$ and $Q_i$ is conjugate in $H_i$ to $H_i(C')$ or $H_i$, where $C' \subset \U$, and $H_i(C')$ 
 is not normal in $H_i$ (for example, because $H_i$ is simple as a LDAG), we obtain $$G_i=Q_i.$$ Thus, $$q(G)=G'.$$ This implies that, for any $g \in G$, there exist $l \in G'$ and $z \in T$ such that
$$
g=lz.
$$
Hence, $$z=l^{-1}g \in Z=G\cap T.$$
We have $\overline{Z}=T$, because the projection $H'\times T\to T$ maps the dense subgroup $G$ to the subgroup $Z$, which has to be dense too.
By Theorem \ref{Cassidy}\eqref{Cassidy1}, $Z\supset T(C)$. Then
$$
hGh^{-1}\supset H(C).
$$

Consider now the case of a general $H$.
Let $\beta\colon H\to F$ be an isomorphism, where $F$ is given by polynomial equations with coefficients in $\QX$.
By \cite{Steinberg} and \cite[Corollary 1.3]{Demazure}, there is an isogeny
$$
\alpha\colon \tilde{H}=H_1\times H_2\times\cdots\times H_r\times T\to F
$$
defined over $\QX$ where $H_i$, $1\Le i\Le r$, are simple simply connected Chevalley groups.
Since $G$ is dense in $H$, the Kolchin connected component $\tilde{G}$ of $\alpha^{-1}(\beta(G))$ is a Zariski dense differential algebraic subgroup of $\tilde{H}$. By the above, there is $h\in \tilde{H}$ such that $$h\tilde{G}h^{-1}\supset \tilde{H}(C).$$ Since $\alpha$ is defined over $\QX$,
$$
\alpha(h)\beta(G)\alpha(h)^{-1}\supset F(C).
$$
Since $F$ is given by polynomial equations over $\Q$, $\Q \subset C$, and $C$ is algebraically closed, \cite[Corollary~AG.13.3]{Borel} implies that $F(C)$ is Zariski dense in $F$.
Thus, $G$ is constant-dense as it is isomorphic to $\alpha(h)\beta(G)\alpha(h)^{-1}$.
\end{proof}

\begin{lemma}\label{split} 
Let $G, G_0$ be as in Definition \ref{defproper} and $G$ be reductive. Let 
\begin{equation}\label{eq:exactseq}
\begin{CD}
0@>>>U@>\iota>>V@>\pi>> W@>>> 0.
\end{CD}
\end{equation}
be an exact sequence of finite-dimensional $G$-modules. If $$G_0=AB\quad\text{and}\quad\Hom_A(W,U) = \Hom_B(W,U) = 0,$$ then sequence~\eqref{eq:exactseq} splits.
\end{lemma}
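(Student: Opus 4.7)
The plan is to reduce to the classical setting by first showing that $G_0$ itself is a reductive linear algebraic group, then splitting~\eqref{eq:exactseq} at the level of $G_0$-modules using complete reducibility, and finally upgrading to a $G$-splitting via a uniqueness argument combined with the normality of $G_0$ in $G$.

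First, I will argue that $G_0$ is normal in $G$: since $G_0$ is Zariski dense in $\overline{G_0}$, the closed normalizer $N_{\overline{G_0}}(G_0)$ contains $G_0$ and therefore equals $\overline{G_0}\supseteq G$. Next, I will show that $G_0$ is reductive as a LAG: its classical unipotent radical $\Ru(G_0)$ is characteristic in $G_0$, hence preserved by every algebraic automorphism of $G_0\otimes_{\C}\U$; in particular, conjugation by any $g\in G$ preserves $\Ru(G_0)$, making it a normal differential algebraic subgroup of $G$, which is unipotent as an LDAG by Lemma~\ref{unipotentlemma}. Reductivity of $G$ will then force $\Ru(G_0)\subseteq \Ru(G)=\{e\}$.

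With $G_0$ reductive, $\Rep_{G_0}$ is semisimple, so viewing~\eqref{eq:exactseq} as a sequence of $G_0$-modules it splits, giving a $G_0$-equivariant section $s\colon W\to V$ of $\pi$. This section will be unique, since any two differ by an element of $\Hom_{G_0}(W,U)\subseteq \Hom_A(W,U)=0$. To upgrade to $G$-equivariance, I will consider, for $g\in G$, the conjugate section $\rho_V(g)\circ s\circ\rho_W(g)^{-1}$; it is again a section of $\pi$ (by $G$-equivariance of $\pi$) and still $G_0$-equivariant (by normality of $G_0$ in $G$ together with $G_0$-equivariance of $s$), so by uniqueness it must coincide with $s$. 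This will make $s$ itself $G$-equivariant, producing the desired splitting.

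The main obstacle I anticipate is the reductivity of $G_0$: one must carefully check that conjugation by $g\in G\subset\overline{G_0}\subset\GL_n(\U)$, which is a priori not a $\C$-morphism, nonetheless preserves the $\C$-rational subgroup $\Ru(G_0)$. The identification $\Ru(G_0\otimes_{\C}\U)=\Ru(G_0)\otimes_{\C}\U$ and the fact that $G_0=(G_0\otimes_{\C}\U)\cap\GL_n(\C)$ should resolve this. Note also that only one of the two hypotheses $\Hom_A(W,U)=0=\Hom_B(W,U)$ is used explicitly in this route; the symmetric pair may be included for robustness or to permit an alternative proof via separately splitting under $A$ and under $B$ and then matching the two splittings using that $A$ and $B$ commute.
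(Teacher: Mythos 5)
There is a genuine gap, and it is fatal to your strategy: $G_0$ need not be normal in $G$, and your argument for normality is invalid. The group $G_0$ consists of constant matrices, so when it has positive dimension it is \emph{not} Zariski closed in $\GL_n(\U)$; consequently its normalizer in $\overline{G_0}$ has no reason to be Zariski closed, and the density argument (``the closed normalizer contains $G_0$, hence equals $\overline{G_0}$'') collapses. In fact the conclusion is simply false: take $G_0=\SL_2(C)$ and $G=\overline{G_0}=\SL_2(\U)$, which is an admissible constant-dense reductive LDAG for this lemma; conjugating a non-central constant matrix by a unipotent matrix with a nonconstant entry yields a nonconstant matrix, so $\SL_2(C)$ is not normal in $\SL_2(\U)$. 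Since your entire passage from the $G_0$-splitting to a $G$-splitting consists of conjugating the unique $G_0$-equivariant section by $g\in G$ and invoking normality to keep the conjugate section $G_0$-equivariant, the proof does not go through. The same confusion between a group of constants and its Zariski closure affects your reductivity step: conjugation by a nonconstant $g\in G$ preserves $\Ru\left(\overline{G_0}\right)$, but it does not preserve the constant points $\Ru(G_0)$, so $\Ru(G_0)$ is not shown to be a normal subgroup of $G$. (That step is repairable: $\overline{\Ru(G_0)}\cap G$ is a normal unipotent differential algebraic subgroup of $G$, hence trivial because $G$ is reductive; this is essentially how the paper argues that $G_0$ is reductive.)

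For comparison, the paper's proof avoids normality of $G_0$ in $G$ altogether: it splits the sequence over the reductive LAG $G_0$, sets $A_1=\overline{A}\cap G$ and $B_1=\overline{B}\cap G$, proves $G=A_1B_1$ by a Zariski-density argument inside $G$, and then shows that the image $p(W)$ of the $G_0$-equivariant section is stable under $B_1$ using $\Hom_A(W,U)=0$ (since $A$ commutes with $B_1$, the composite $\Pi\circ b_1\circ p$ is $A$-equivariant, hence zero) and, symmetrically, stable under $A_1$ using $\Hom_B(W,U)=0$. Both hypotheses are genuinely needed, one for each factor of $G=A_1B_1$; the fact that your route consumes only $\Hom_A(W,U)=0$ (indeed only $\Hom_{G_0}(W,U)=0$) and would therefore prove a strictly stronger statement should itself have been a warning sign.
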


\begin{proof}
We need to show that there is a $G$-equivariant homomorphism $p\colon W\to V$ such that $$\pi\circ p=\Id.$$ Note that such a homomorphism is $G$-equivariant if and only if $$p(W)\subset V$$ is a $G$-submodule. The group $G_0$ is reductive. Indeed, the Zariski closure of $\Ru(G_0)$ in $G$ belongs to $\Ru(G)$. Since $G$ is reductive, $G_0$ is so too. Then sequence~\eqref{eq:exactseq} splits as a sequence of $G_0$-modules. We will use the same notation $\iota,\pi,p$ for maps of $G_0$-modules.
Set $$A_1=\overline{A}\cap G\quad\text{and}\quad B_1=\overline{B}\cap G,$$ where $\overline{A}$ and $\overline{B}$ are the Zariski closures in $\GL_n(\U)$. Then, $$G=A_1B_1.$$ Indeed, in the Zariski topology induced on $G$ from $\GL_n(\U)$, $A_1,B_1\subset G$ are the closures of $A,B\subset G$, respectively. The product $A_1B_1$ is Zariski closed in $G$. Hence, it contains the closure of $AB=G_0$, which is $G$. Let $$p\colon W\to V$$ be a $G_0$-equivariant homomorphism such that $\pi\circ p=\Id$. We will show that $p(W)\subset V$ is preserved by $G$.

Let $b_1\in B_1$. The subspace $$(b_1\circ p)(W)\subset V$$ is $A$-invariant because $A$ and $B_1$ commute (by a property of the Zariski closures). The projection $$\Pi\colon V=U\oplus p(W)\to U$$ is a $G_0$-equivariant homomorphism. Hence, the map $$\Pi\circ b_1\circ p\colon W\to U$$ is an $A$-equivariant homomorphism. Since
$\Hom_{A}(W,U)=0$, $$\Pi\circ b_1\circ p=0,$$ which means that $b_1p(W)\subset p(W)$. Thus, $p(W)$ is stable under $B_1$. Similarly, it is preserved by $A_1$. Finally, since $G=A_1B_1$, $$Gp(W)\subset p(W),$$
which finishes the proof.
\end{proof}

\subsection{Main result} We are now ready to prove the main result.

\begin{theorem}\label{main}
Let $G\subset\GL_n(\U)$ be a reductive LDAG and $$\rho\colon G\to\GL(V)$$ its faithful representation of minimal dimension. Then:
\begin{enumerate}
 \item\label{main1} The representation $\rho$ is completely reducible.
 \item\label{main2} $H=\overline{\rho(G)}$ is a reductive LAG.
 \item\label{main3} The group $H$, up to an algebraic isomorphism, does not depend on $\rho$.
\end{enumerate}
\end{theorem}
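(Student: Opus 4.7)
The plan is to prove the three assertions in order, using earlier lemmas as building blocks, with the uniqueness assertion \eqref{main3} being the principal obstacle.

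For \eqref{main1}, I would first invoke Lemma~\ref{proper} to replace $G$ (within its LDAG isomorphism class, which does not alter the minimal faithful dimension) by a constant-dense subgroup, so that $G_0 \subset G \subset H_0 := \overline{G_0}$ for a reductive LAG $G_0 \subset \GL_n(C)$. Suppose for contradiction that some $G$-submodule $U \subset V$ has no $G$-invariant complement, and set $W = V/U$. The exact sequence
\[
0 \to U \to V \to W \to 0
\]
splits as a sequence of $G_0$-modules, since $G_0$ is a reductive linear algebraic group over $C$. To lift the splitting from $G_0$ to $G$, apply Lemma~\ref{split} to a decomposition $H_0 = A \cdot B$ with $B = C_{H_0}(A)$ supplied by Proposition~\ref{App2}\eqref{App22}. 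The hypothesis $\Hom_A(W,U) = \Hom_B(W,U) = 0$ of that lemma is to be verified using the minimality of $\dim V$: if an irreducible $H_0$-constituent appeared in both $U$ and $W$, one could delete a redundant copy and still obtain a faithful representation of strictly smaller dimension, contradicting minimality. Selecting $A$ so that its action separates the irreducible constituents of $U$ from those of $W$ then yields the required vanishing, Lemma~\ref{split} forces the sequence to split, and we reach a contradiction.

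For \eqref{main2}, I would use \eqref{main1}: since $G$ is Zariski dense in $H = \overline{\rho(G)}$, the $G$-invariant subspaces of $V$ coincide with the $H$-invariant ones, so $V$ is completely reducible as an $H$-module. Were $N := \Ru(H) \neq \{e\}$, on any irreducible $H$-submodule $V' \subset V$ the fixed subspace $(V')^N$ would be nonzero (as $N$ is unipotent) and $H$-stable (as $N$ is normal), hence equal $V'$ by irreducibility. Summing over irreducible summands, $N$ would act trivially on $V$, contradicting the faithful inclusion $H \subset \GL(V)$. Thus $\Ru(H) = \{e\}$ and $H$ is reductive.

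For \eqref{main3}, given two minimal faithful representations $\rho_1, \rho_2$ yielding reductive closures $H_1, H_2$ in which $G$ sits as a Zariski-dense subgroup, I would decompose $H_i = H_i' \cdot Z(H_i)^\circ$ via Proposition~\ref{App2}\eqref{App21}. By Theorem~\ref{Cassidy}\eqref{Cassidy3}, $H_i'$ is the almost direct product of the Zariski closures of the simple LDAG components of the commutator subgroup of $G$, and by Theorem~\ref{Cassidy}\eqref{Cassidy2} the Zariski closure of each such simple LDAG in any faithful representation is, up to conjugation, the ambient simple Chevalley group; hence the isomorphism type of $H_1' \cong H_2'$ is read off canonically from $G$. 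The hard part will be matching the central tori $T_i := Z(H_i)^\circ$: the plan is to use Theorem~\ref{Cassidy}\eqref{Cassidy1} to show that $T_i \cap G$ already contains $T_i(C)$ and therefore pins down $T_i$ up to isomorphism, and then to argue via minimality of $\dim V_i$ that the rank of $T_i$ is forced to equal a specific invariant of the differential-algebraic abelianization of $G$ (the minimal number of characters needed to separate its points). Assembling the isomorphisms of semisimple and toral parts compatibly with the almost-direct-product structure then yields the desired LAG isomorphism $H_1 \cong H_2$.
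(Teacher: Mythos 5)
Your part (2) is sound and is essentially the paper's argument, and your frame for part (1) (reduce via Lemma~\ref{proper} to a constant-dense group and split the sequence with Lemma~\ref{split}) is the right one; but the verification of the hypotheses of Lemma~\ref{split}, which is the heart of the whole proof, is missing. Note first that Lemma~\ref{split} requires a commuting decomposition of the \emph{constant} group $G_0$ (not of $H_0=\overline{G_0}$) together with \emph{both} vanishings $\Hom_A(W,U)=0$ and $\Hom_B(W,U)=0$. Your ``delete a redundant copy'' argument shows at best that $U$ and $W$ share no irreducible $G_0$-constituents, i.e. $\Hom_{G_0}(W,U)=0$; that is far weaker, since $U$ and $W$ can share $A$-constituents for a subgroup $A$ even when they share none over $G_0$, and ``selecting $A$ so that its action separates the constituents'' gives no control over $B=C_{G_0}(A)$ (for instance $A=G_0$ forces $B=Z(G_0)$, and $\Hom_B(W,U)$ is then typically nonzero). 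The paper's proof takes the specific choice $A=\Ker(G_0\colon U)$, $B=C_{G_0}(A)$ (Proposition~\ref{App2}\eqref{App22}) and proves the two vanishings by two different minimality arguments: any $\alpha\in\Hom_A(W,U)$ factors through $W^A$ because $U=U^A$, and $W^A=0$ since $U\oplus W_A$ is already a faithful $G_0$-module of dimension $\dim V-\dim W^A$; and for $\alpha\in\Hom_B(W,U)$ one shows $V/\Img\alpha$ is faithful, using $\Ker(G_0\colon W)=\Ker(B\colon W)$, which rests on Proposition~\ref{App2}\eqref{App23}. Moreover, before invoking minimality at all one must know $V$ is of minimal dimension among faithful \emph{$G_0$}-modules, not just $G$-modules; a smaller faithful $G_0$-module is not a $G$-module, so the transfer requires Theorem~\ref{prolongation} (extend it to $\overline{G_0}\supset G$ and restrict). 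Finally, Lemma~\ref{proper} and Proposition~\ref{App2} are statements about connected groups, so you also need the reduction of part (1) to connected $G$, which you skipped.

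For part (3) your structure-matching plan is genuinely different from the paper's route, but it has a real gap exactly where you say ``the hard part'': even granting an identification of the semisimple parts (whose isogeny type is not pinned down by Theorem~\ref{Cassidy}\eqref{Cassidy2}--\eqref{Cassidy3} alone) and of the central tori, the isomorphism class of a reductive group is not determined by the pair (semisimple part, torus) --- one must also match the finite intersection in the almost direct product (compare $\GL_2$ with $\SL_2\times\Gm$), and your ``assembling compatibly'' together with ``the minimal number of characters separating points of the abelianization'' are assertions, not arguments. The paper avoids all of this: once (1) is established, $\tau\rho^{-1}$ and $\rho\tau^{-1}$ are completely reducible faithful representations, so Theorem~\ref{prolongation} extends them to algebraic homomorphisms $\alpha\colon H\to K$ and $\beta\colon K\to H$; the composites are the identity on the Zariski-dense subgroups $\rho(G)$ and $\tau(G)$, hence are the identity, giving $H\cong K$ in a few lines. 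You should either substitute that argument or supply the missing isogeny/torus/intersection matching, which is substantially harder than the sketch suggests.
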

\begin{proof}
Statements~\eqref{main2} and~\eqref{main3} of the theorem follow from statement~\eqref{main1}. Indeed, suppose that $\rho$ is completely reducible, that is, $$V=\bigoplus_{i=1}^kW_i,$$ where $W_i$, $1\Le i\Le k$, are simple submodules of the $G$-module $V$. We will show that $H$ is reductive. We have $$\rho(G)\subset H\subset\prod_i\GL(W_i).$$ The projection $$\pi_j\colon\prod_i\GL(W_i)\to\GL(W_j)$$ maps $\overline{\rho(G)}=H$ to $H_j=\overline{\pi_j(\rho(G))}$. Since $\pi_j$ is onto, it maps normal subgroups to normal subgroups. Therefore, $$\pi_j(\Ru(H))\subset\Ru(H_j).$$ We have $\Ru(H_j)=\{e\}$. Indeed, otherwise, by the Lie-Kolchin Theorem, the fixed point subspace $$W_j^{\Ru(H_j)}\subset W_j$$ has a positive dimension. Since $\Ru(H_j)$ is a normal subgroup of $H_j$, this subspace is invariant under $H_j$ and, therefore, under $G$. But this contradicts to the simplicity of the $G$-module $W_j$. Since $$\Ru(H)\subset\prod_i{\Ru(H_i)}=\{e\},$$ we have $$\Ru(H)=\{e\}.$$ Thus, $H$ is reductive.

Now, we deduce statement~\eqref{main3} from statements~\eqref{main1} and~\eqref{main2}. Let $\tau\colon G\to\GL(W)$ be another faithful representation of minimal dimension and $\Gamma=\overline{\tau(G)}$.
The differential homomorphism $$\tau\rho^{-1}\colon\rho(G)\to\tau(G)$$ determines a completely reducible faithful representation of $\rho(G)$ in $\GL(W)$, having minimal dimension. By Theorem~\ref{prolongation}, $\tau\rho^{-1}$ extends to an algebraic homomorphism $\alpha\colon H\to K$. Similarly, $$\rho\tau^{-1}\colon\tau(G)\to\rho(G)$$ extends to $\beta\colon K\to H$. The algebraic homomorphism $\alpha\beta$ is the identity on $\tau(G)$ and, therefore, it is trivial on $K=\overline\tau(G)$. Similarly, $\beta\alpha=\Id$. Hence, $H$ and $K$ are isomorphic algebraic groups.

We will prove statement~\eqref{main1} of the theorem. To do this, we reduce the general case to that of connected $G$. Suppose that the restriction of $\rho$ onto $G^\circ$ is completely reducible. Then the group $H_1=\overline{\rho\left(G^\circ\right)}$ is reductive. Moreover, since, by \cite[page 908]{Cassidy}, we have $\left | G/G^\circ \right |<\infty$, $$\left | H/H_1\right |<\infty.$$ This implies that $H$ is reductive and, thus, $V$ is a semisimple $H$-module. Since every simple $H$-submodule of $V$ is a simple $G$-submodule, the representation $\rho$ is completely reducible. 

Suppose that $G$ is connected. If $$\alpha\colon G_1\to G$$ is an isomorphism of LDAGs, then the complete reducibility of $\rho\alpha$ implies that of $\rho$. Therefore, by Lemma \ref{proper}, it suffices to prove Theorem \ref{main}\eqref{main1} for a constant-dense LDAG $G$. So, we suppose that $$G_0\subset G\subset\overline{G_0}=H,$$ where $G_0\subset\GL_n(\C)$ is a LAG. Fix a short exact sequence \eqref{eq:exactseq} of $G$-modules. We need to show that this sequence splits.

Note that $V$ is a faithful $G_0$-module of minimal dimension. Indeed, it is faithful, because it is faithful as a $G$-module. Suppose that $V_0$ is a faithful $G_0$-module of minimal dimension and show that $$\dim V_0=\dim V.$$ Since $G_0$ is a reductive LAG, it follows from Theorem~\ref{prolongation} that $V_0$ is a faithful $H$-module. Because of the minimality of the $G$-module $V$, $\dim V_0=\dim V$.

Let $A\subset G_0$ be the kernel of the action of $G$ on $U$ (such an action is further denoted by  $G_0\colon U$). By Proposition~\ref{App2}\eqref{App22}, we have $$G_0=AB,$$ where $B=C_{G_0}(A)$. By Proposition~\ref{App2}, the subgroups $A$ and $B$ are reductive. For a $\Gamma$-module $L$, where $\Gamma$ is a reductive algebraic group, we denote the submodule of invariants by $L^\Gamma$. There is a $\Gamma$-submodule $L_\Gamma\subset L$ such that $$L\simeq L^\Gamma\oplus L_\Gamma.$$ Therefore, $L_\Gamma$ is isomorphic to the direct sum of simple nontrivial $\Gamma$-modules. If $$\alpha\colon L\to M$$ is a homomorphism of $\Gamma$-modules, then $$\alpha\left(L^\Gamma\right)\subset M^\Gamma\quad\text{and}\quad\alpha\left(L_\Gamma\right)\subset M_\Gamma.$$ This follows from the fact that $\alpha(S)\simeq S$ or $\alpha(S)=0$ for any simple $\Gamma$-module $S\subset L$.

We will show that $$\Hom_A(W,U)=0.$$ Let $\alpha\in\Hom_A(W,U)$. We have $U=U^A$. Hence, $$\alpha(W)=\alpha\left(W^A\right)+\alpha(W_A)=\alpha\left(W^A\right),$$ because $\alpha(W_A)\subset U_A=0$. The $G_0$-module $$V_0=U\oplus W_A$$ is faithful (the $A$-module $W_A$ is $G_0$-invariant, because $A$ and $B$ commute). Indeed, $$\Ker(G_0\colon V_0)\subset\Ker(G_0\colon U)=A.$$ Since $V\simeq V_0\oplus W^A$ is faithful, $$\Ker(G_0\colon V_0)=\{e\}.$$ By the minimality of $V$, we obtain $V=V_0$, or equivalently, $W^A=\{0\}$. Therefore, $\alpha=0$.
Now we show that $$\Hom_B(W,U)=0.$$ Let $\alpha\in\Hom_B(W,U)$. Consider the $G_0$-module $$V_0=V/\Img\alpha,$$ and note that $V_0$ has a submodule isomorphic to $W$. We have $$\Ker(G_0\colon V_0)\subset\Ker(G_0\colon W)\stackrel{(*)}{=}\Ker(B\colon W)\subset\Ker(B\colon\Img\alpha).$$ The equality (*) follows from Proposition~\ref{App2}\eqref{App23}. Since $$V\simeq V_0\oplus\Img\alpha$$ and $V$ is faithful, $V_0$ is faithful. Therefore, $V_0=V$ and $\Img\alpha=0$. 
\end{proof}

\begin{corollary}\label{cor:main} A LDAG $G$ is reductive if and only if, for any differential rigid abelian tensor generator $X$ of $\Rep_G$ of minimal dimension, the rigid abelian tensor category generated by $X$ is semisimple.  
\end{corollary}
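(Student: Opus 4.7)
The plan is to package together Theorem \ref{ZariskiClosure} and Theorem \ref{main} (together with the Zariski-closure facts about the unipotent radical from Section \ref{unipotentsection}) to reduce the corollary to the standard equivalence ``reductive LAG in char.\ 0 $\iff$ semisimple representation category''. As noted in the discussion preceding Theorem \ref{ZariskiClosure}, a differential rigid abelian tensor generator $X$ of $\Rep_G$ is the same thing as a faithful representation $\rho\colon G\to\GL(V)$ (with $V=\omega(X)$), and ``of minimal dimension'' translates to $\dim V$ being minimal among such representations. By Theorem \ref{ZariskiClosure}, the rigid abelian tensor category generated by $X$ is equivalent to $\Rep_{H_X}$, where $H_X$ is the Zariski closure of $\rho(G)$ in $\GL(V)$. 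Since $\Char\K=0$, the category $\Rep_{H_X}$ is semisimple if and only if the linear algebraic group $H_X$ is reductive \cite[Chapter~2]{SpringerInv}. Thus the corollary is equivalent to: $G$ is reductive $\iff$ $H_X$ is a reductive LAG for every minimal-dimensional faithful $X$.

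For the forward implication, assume $G$ is reductive and let $X$ correspond to any faithful representation of minimal dimension. Theorem \ref{main}\eqref{main2} then asserts exactly that $H_X=\overline{\rho(G)}$ is a reductive LAG, so the category generated by $X$ is semisimple.

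For the backward implication, it suffices to exhibit \emph{one} minimal-dimensional faithful $X$ for which the generated category is semisimple and conclude from it that $G$ is reductive. Under the standing hypothesis such an $X$ exists, and $H_X$ is reductive as above. By Lemma \ref{Zariskinormal}, the Zariski closure $\overline{\Ru(G)}\subset H_X$ is a normal algebraic subgroup of $H_X$, and by Lemma \ref{lem:unipotent} it is unipotent. Since $H_X$ is reductive, $\Ru(H_X)=\{e\}$, whence $\overline{\Ru(G)}=\{e\}$ and therefore $\Ru(G)=\{e\}$, i.e.\ $G$ is reductive.

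The argument is short because all the substantive work has been done already: identifying the rigid abelian tensor category generated by $X$ with $\Rep_{H_X}$ is Theorem \ref{ZariskiClosure}, and the nontrivial ``reductive LDAG $\Rightarrow$ reductive Zariski closure in minimal dimension'' direction is Theorem \ref{main}. The only place requiring care is that the characterization must hold ``for any'' minimal $X$ rather than for a distinguished one — but this is automatic, since by Theorem \ref{main}\eqref{main3} all such $H_X$ are isomorphic when $G$ is reductive, and in the converse direction a single choice already forces $\Ru(G)=\{e\}$.
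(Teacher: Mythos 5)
Your proposal is correct and follows exactly the route the paper intends: the corollary is stated without a separate proof precisely because it is the combination of Theorem~\ref{ZariskiClosure} (the category generated by $X$ is $\Rep_{H_X}$ for $H_X=\overline{\rho(G)}$), Theorem~\ref{main} (reductivity of that closure for any minimal faithful $\rho$), the easy converse via Lemmas~\ref{Zariskinormal} and~\ref{lem:unipotent} (or directly from $\Ru(G)=\Ru(\overline G)\cap G$), and the standard char-$0$ equivalence between reductivity and semisimplicity of the representation category. Your handling of the ``for any'' quantifier in both directions is also the intended one, so there is nothing to add.
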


\begin{remark} One can show that a differential algebraic representation $\rho$ of $G=(\Gm)^k$ is in fact algebraic if the restriction of $\rho$ onto $G(C)$ has all irreducible subrepresentations pairwise non-isomorphic. One could expect this to be true for any reductive LDAG $G$. However, this is not the case. Indeed, consider the following faithful $4$-dimensional differential representation $\rho$ of $G=\SL_2$ given by
$$
\SL_2(\U)\ni\begin{pmatrix}
a & b\\
c& d
\end{pmatrix} \mapsto
\begin{pmatrix}
a^2 &2ab &b^2 & 2(ab'-a'b)\\
2ac&1&2bd&2(ad'-bc'-a'd+b'c)\\
c^2&2cd&d^2&2(cd'-c'd)\\
0&0&0&1
\end{pmatrix},
$$
which can be viewed as an action of $\SL_2$ on an invariant subspace of the space of
differential polynomials of degree $2$ and order up to $1$. The restriction of $\rho$ onto $G(C)$ is the sum of two non-isomorphic irreducible representations. But $\rho$ is not algebraic. This shows that the requirement of minimality is essential in Theorem~\ref{main}. In fact, this example has led to a new development of the differential representation theory of $\SL_2$ \cite{MinOvRepSL2}.
\end{remark}

\subsection{Characterization of simple LDAGs}
We will now provide a complete Tannakian characterization of simple LDAGs.
This description consists of two steps: characterize simple LDAGs in terms of
simple LAGs (Theorem~\ref{thm:diffsimple}) and then characterize simple LAGs themselves in terms of their
representations (Theorem~\ref{thm:algsimple}). The goal is to use this in developing algorithms computing Galois groups of linear differential equations with parameters.

For a $G$-module $V$, set
$$
T(V)=\bigoplus_{n=0}^\infty V^{\otimes n}.
$$
\begin{theorem}\label{thm:algsimple}
A connected noncommutative LAG $G$ is simple if and only if, for any non-trivial $G$-module $X$, there
exists a $G$-module $Y$  such that any $G$-module $Z$ is a subquotient in $T(X)\otimes Y$.
\end{theorem}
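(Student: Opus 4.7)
The proof handles the two directions of the biconditional separately; the forward direction is where the core content lies. For the forward direction, suppose $G$ is simple, connected, and noncommutative. Then $G$ is reductive (its unipotent radical, being connected normal, is trivial) and perfect (since $[G,G]$ is connected normal, and non-commutativity rules out $[G,G]=\{e\}$). Given a nontrivial $G$-module $X$, let $N:=\ker(G\to\GL(X))$. My first step is to show $N$ is finite and central: $N^\circ$ is a connected normal subgroup of the simple group $G$, and $N^\circ=G$ would make $X$ trivial, so $N^\circ=\{e\}$, whence $N\subset Z(G)$ is finite and $\widehat{N}$ is a finite abelian group. Since $G$ is perfect, $\det X=\one$, so $X^*\cong\wedge^{\dim X-1}X$ is a direct summand of $X^{\otimes(\dim X-1)}$. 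Combined with the classical Tannakian fact that the rigid tensor subcategory of $\Rep_G$ generated by $X$ is $\Rep_{G/N}$, every irreducible $G/N$-module is a direct summand of some $X^{\otimes n}$, hence a subquotient of $T(X)$.

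To reach $G$-modules on which $N$ acts nontrivially, I construct $Y$ as follows. Fix a faithful $G$-module $V$; its set of $N$-weights separates points of the finite abelian $N$, hence generates $\widehat{N}$, so every $\chi\in\widehat{N}$ appears as an $N$-weight in some $V^{\otimes a}\otimes(V^*)^{\otimes b}$, and thus in one of its irreducible summands $Y_\chi$. Set $Y:=\bigoplus_{\chi\in\widehat{N}}Y_\chi$, which is finite-dimensional since $|\widehat{N}|<\infty$. For an irreducible $G$-module $Z$ with $N$-character $\chi_Z$, the module $Z\otimes Y_{\chi_Z}^*$ has trivial $N$-action, so by the first step each of its irreducible summands $W$ is a direct summand of some $X^{\otimes n}$. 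The canonical embedding $\one\hookrightarrow Y_{\chi_Z}^*\otimes Y_{\chi_Z}$ then yields $Z\hookrightarrow\bigoplus_W W\otimes Y_{\chi_Z}$, making $Z$ a direct summand of some $W\otimes Y_{\chi_Z}\subset X^{\otimes n}\otimes Y\subset T(X)\otimes Y$. Semisimplicity of $\Rep_G$, together with the fact that each irreducible of $T(X)\otimes Y$ appears with unbounded multiplicity (since $\one$ is a summand of $X^{\otimes\dim X}$ by the perfectness argument), extends the conclusion to arbitrary finite-dimensional $Z$.

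For the reverse direction, I argue the contrapositive: if $G$ has a connected proper nontrivial normal subgroup $H$, the property fails. Take $X$ to be a nontrivial $G/H$-module pulled back to $G$, which exists because $G/H$ is a positive-dimensional connected algebraic group. Since $H$ acts trivially on $T(X)$, for any $G$-module $Y$ one has $(T(X)\otimes Y)\big|_H\cong Y\big|_H^{\oplus\infty}$ as $H$-modules, so the $H$-composition factors (or $h$-Jordan block sizes for a fixed nontrivial $h\in H(\U)$, in the unipotent case) of any $H$-subquotient of $T(X)\otimes Y$ are controlled by those of $Y\big|_H$. If $H$ is not unipotent, $\Rep_H$ has infinitely many irreducible isomorphism classes, each appearing in the $H$-restriction of some tensor power of a faithful $G$-module (by Tannaka for $H$); picking a $G$-module $Z$ with an $H$-composition factor outside the finitely many factors of $Y\big|_H$ gives the obstruction. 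If $H$ is unipotent, a nontrivial $h\in H(\U)$ acts as a nonzero nilpotent on a faithful $G$-module $W$, and $\mathrm{Sym}^kW$ develops $h$-Jordan blocks of size growing linearly in $k$, so $Z=\mathrm{Sym}^kW$ for large enough $k$ yields the desired $Z$. The main obstacle I anticipate is the bookkeeping in the forward direction: keeping $Y$ finite-dimensional while still covering every character in $\widehat{N}$ and threading the argument through the canonical embedding $\one\hookrightarrow Y_{\chi_Z}^*\otimes Y_{\chi_Z}$; the unipotent sub-case of the reverse direction is also delicate, requiring an explicit Jordan-block growth argument in symmetric powers.
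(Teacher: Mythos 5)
Your proposal is correct, but it takes a genuinely different route from the paper in both directions. For the forward direction the paper works with the coordinate Hopf algebra: since $K=\ker(G\to\GL(X))$ is finite, the Hopf algebra $A$ of $G$ is integral over, and a finitely generated module over, the Hopf algebra $A^K$ of $G/K$; the paper takes $Y$ to be a finite-dimensional $G$-submodule of $A$ containing module generators, and combines the multiplication map $A^K\otimes Y\to A$ with an embedding $Z\hookrightarrow A^r$ and the fact that every $G/K$-module is a subquotient of $T(X)$ (\cite[Section~3.5]{Waterhouse}, after noting the image lies in $\SL(X)$). You instead exploit that the finite kernel $N$ is central, build $Y=\bigoplus_{\chi\in\widehat{N}}Y_\chi$ realizing every character of $N$, and reduce an irreducible $Z$ to the trivial-central-character case by tensoring with $Y_{\chi_Z}^*$, finishing with the coevaluation embedding $\one\hookrightarrow Y_{\chi_Z}^*\otimes Y_{\chi_Z}$; your use of $\det X=\one$ to avoid duals of $X$ plays the same role as the paper's $\SL(X)$ remark. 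This works because simple implies reductive, hence $\Rep_G$ is semisimple in characteristic zero, but note that your argument leans on that semisimplicity (Schur's lemma for the central $N$, direct-summand bookkeeping, the unbounded-multiplicity step), whereas the paper's Hopf-algebra argument does not; this matters because the paper reuses its proof for the differential analogue, Theorem~\ref{thm:diffsimple}\eqref{diffsimple4}, where the category of differential representations of a simple LDAG is generally not semisimple, so your forward argument would not transplant there. For the converse, the paper first shows that any candidate $Y$ must be $K$-faithful, then picks a one-dimensional torus $T\subset K$ and derives a contradiction by weight doubling with $Z=Y\otimes Y$ (highest $T$-weight $2d$ on $Z$ versus the bound $d$ on $T(X)\otimes Y$); you instead compare $H$-composition factors via $(T(X)\otimes Y)\big|_H\cong (Y|_H)^{\oplus\infty}$, with a separate Jordan-block-growth argument in $\mathrm{Sym}^kW$ when $H$ is unipotent. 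Your version is longer but more robust in one corner: the paper's argument literally requires a torus inside the chosen normal subgroup, which need not exist when that subgroup is unipotent (the theorem does not assume $G$ reductive), a case your contrapositive treats explicitly, while the paper's weight-doubling trick is the quicker argument whenever a torus is available.
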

\begin{proof}
Let $G$ be a connected noncommutative simple LAG. Then the image of $G\to\GL(X)$ lies in $\SL(X)$. Set $$K = \ker(G \to \GL(X)).$$ According to \cite[Section 3.5]{Waterhouse}, any representation of $G/K$ is a subquotient of $T(X)$.

Let $A = \K[f_1,\ldots,f_n]$ be the Hopf algebra of $G$ and $A^K \subset A$ be the Hopf algebra of $G/K$. Then, 
\begin{equation}\label{eq:Agenerated}
A = A^K[f_1,\ldots,f_n].
\end{equation} 
Since $G$ is a simple linear algebraic group and $K$ is normal in $G$, $K$ is a finite group. Therefore, by \cite[Excercise~5.12]{Atiyah}, the ring $A$ is integral over the ring $A^K$. Hence, by~\eqref{eq:Agenerated} and \cite[Proposition~5.1]{Atiyah}, there exist $g_1,\ldots,g_m \in A$ such that $A$ is generated by $\{g_1,\ldots, g_m\}$ as an $A^K$-module. By \cite[Proposition~2.3.6]{Springer}, there exists a $\K$-finite dimensional $G$-submodule $Y$ of $A$ containing the set $\{g_1,\ldots,g_m\}$. 

Since $K\subset G$ is normal, $A^K$ is a submodule of the $G$-module $A$. The universal property of the tensor product yields the existence of a linear map
$$
\varphi\colon A^K\otimes Y\to A,\quad a\otimes y\mapsto ay,
$$ 
which is a map of $G$-modules, where $g(a\otimes y)=g(a)\otimes g(y)$ for all $g \in G$.
Let $Z$ be a finite-dimensional $G$-module. By \cite[Lemma 3.5]{Waterhouse}, $Z$ is isomorphic to a submodule of $A^r$ for some $r\Ge 0$. Consider the $G$-homomorphism
$$
\varphi^r\colon \left(A^K\otimes Y\right)^r\to A^r, \quad (x_1,\dots,x_r)\mapsto (\varphi(x_1),\dots,\varphi(x_r)),
$$
where $x_i\in A^K\otimes Y$, $1\Le i \Le r$.
We have $Z$ isomorphic to a quotient of $U=(\varphi^r)^{-1}(Z)$. Choose a basis $\{e_1,\ldots,e_p\}$ of $U$ over $\K$. There exist $b_{ij}^k \in A^K$ and $c_{ij}^k \in Y$, with $1\Le i\Le p$, $1\Le j \Le q$, and $1\Le k\Le r$, 
such that
$$e_i = \left(\sum_{j=1}^q b_{ij}^1\otimes c_{ij}^1,\dots,\sum_{j=1}^q b_{ij}^r\otimes c_{ij}^r\right),\ 1\Le i\Le p.$$
Let $V\subset A^K$ be a finite-dimensional $G$-submodule containing $a_{11}^1,\dots, a_{pq}^r$. We have $$U\subset (V\otimes Y)^n=V^n\otimes Y.$$ Since $K$ acts trivially on $V$, $V$ is a $G/K$-module and, therefore, is a quotient of a $G/K$-module $W\subset T(X)$. Thus, we have
$$
T(X)\otimes Y\supset W\otimes Y\to V^n\otimes Y\supset U\to Z,
$$
where all arrows correspond to surjective $G$-morphisms. Hence, $Z$ is a subquotient in $T(X)\otimes Y$. 

Conversely, suppose that $G$ is not simple. Let $K$ be a connected normal algebraic subgroup of $G$ having positive dimension. Set $X$ to be a nontrivial $G/K$-module and let $Y$ be a $G$-module as in the statement. Then $Y$ is $K$-faithful. (Otherwise, the action of $G$ on $T(X)\otimes Y$ would have a kernel implying the non-existence of faithful subquotients.) Set $$Z=Y\otimes Y.$$ By the assumption, there exists a $G$-module $V\subset T(X)\otimes Y$ and a surjective $G$-morphism $\varphi : V\to Z$.
  
Fix an algebraic subgroup $T\subset K$ with $\alpha: T \cong \Gm$ (called one-dimensional torus; such a subgroup exists, for example, \ by \cite[Corollaries~IV.11.5 and~IV.14.2]{Borel}). Then $Y$ is decomposed into the direct sum of $T$-submodules \cite[Chapter~III.8]{Borel}:
$$
Y_k=\left\{y\in Y\:|\: t(y)=\alpha(t)^k\cdot y,\ t \in T\right\}, \quad k\in\ZX. 
$$
Let $d$ be the highest weight of the action of $T$ on $Y$, that is, the largest non-negative integer such that $Y_d$ or $Y_{-d}$ has positive dimension. Since the action of $T$ on $T(X)$ is trivial, $d$ is the highest weight for the action of $T$ on $T(X)\otimes Y$. Since $Z$ is the sum of $Y_i\otimes Y_j$, the highest weight for $T$ on $Z$ is $2d$. Since $d>0$ (because $Y$ is $K$-faithful), this implies the non-existence of $V$ and $\varphi$. We arrive at a contradiction to the existence of $Y$. 
\end{proof}

Let $G$ be a  LDAG. For a $G$-module $V$, set
$$
T_D(V)=\bigoplus_{{n,p=0}\atop{1\Le i \Le m}}^\infty \left(F_i^pV\right)^{\otimes n}.
$$

\begin{theorem}\label{thm:diffsimple} Let $G$ be a LDAG. Let also $$\rho_1 : G\to \GL(V)\quad\text{and}\quad \rho_2 : G\to \GL(U)$$ be two faithful representations of $G$ with $\rho_1$ of minimal dimension and $\rho_2$ completely reducible. Then, the following statements are equivalent:
\begin{enumerate}
\item\label{diffsimple1} The LDAG $G$ is simple.
\item\label{diffsimple2} The linear algebraic group $\overline{\rho_1(G)}$ is simple.
\item\label{diffsimple3} The linear algebraic group $\overline{\rho_2(G)}$ is simple.
\end{enumerate}
If $G$ is connected noncommutative, then these are equivalent to
\begin{enumerate}
\setcounter{enumi}{3}
\item\label{diffsimple4} For any non-trivial $G$-module $X$, there
exists a $G$-module $Y$  such that any $G$-module $Z$ is a subquotient in $T_D(X)\otimes Y$.
\end{enumerate}
\end{theorem}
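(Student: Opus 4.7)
The plan is to establish (1)$\Leftrightarrow$(2) first, observe that the same argument yields (1)$\Leftrightarrow$(3), and then combine with Theorem~\ref{thm:algsimple} in order to deduce (1)$\Leftrightarrow$(4). For (1)$\Rightarrow$(2): a simple LDAG is reductive (its unipotent radical $\Ru(G)$ is a connected normal differential algebraic subgroup, so equals $\{e\}$ or $G$; the latter is excluded because a nontrivial unipotent LDAG is solvable by Lemma~\ref{unipotentlemma}\eqref{unipotentlemma3}, forcing $G'\subsetneq G$, whereas simplicity of noncommutative $G$ requires $G'=G$). Theorem~\ref{main} then shows $H_1:=\overline{\rho_1(G)}$ is a reductive LAG, and $G'=G$ together with Zariski density of $\rho_1(G)$ in $H_1$ gives $H_1=H_1'$ semisimple. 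Theorem~\ref{Cassidy}\eqref{Cassidy3} now presents $\rho_1(G)$ as an almost direct product $G_1\cdot\ldots\cdot G_r$ of its connected simple normal subgroups with each $\overline{G_i}$ a simple normal algebraic subgroup of $H_1$; simplicity of $G$ forces $r=1$, so $H_1=\overline{G_1}$ is a simple LAG.

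For (2)$\Rightarrow$(1), I would assume $H_1$ simple (hence, up to isogeny, a simple Chevalley group $S$) and first argue that $G$ is Kolchin connected. By Lemma~\ref{Zariskinormal}, $\overline{G^\circ}$ is a connected normal algebraic subgroup of $H_1$, so equals $\{e\}$ (which would force $G$ finite and $H_1$ finite, contradicting simplicity of the positive-dimensional $H_1$) or $H_1$. Theorem~\ref{Cassidy}\eqref{Cassidy2} applied to $G^\circ$ then gives $G^\circ=H_1$ or $G^\circ$ conjugate in $H_1$ to $H_1(C')$ for some $C\subset C'\subset\U$; combined with the self-normalizing property of $H_1(C')$ in $H_1$ for $C'\subsetneq\U$, the finite extension $G/G^\circ$ collapses and $G=G^\circ$. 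Simplicity of $G$ as an LDAG then follows because any nontrivial connected normal differential algebraic subgroup $N\subset G$ Zariski-closes to a connected normal algebraic subgroup of $H_1$ (Lemma~\ref{Zariskinormal}), hence equal to $H_1$, and Theorem~\ref{Cassidy}\eqref{Cassidy2} applied to $N$, combined with the non-normality of $H_1(C'')$ in $H_1(C')$ for $C''\subsetneq C'$, forces $N=G$. The equivalence (1)$\Leftrightarrow$(3) is proved in the same way: the reductivity step in the proof of Theorem~\ref{main}\eqref{main2} uses only complete reducibility of the representation, not minimality of dimension, so $H_2:=\overline{\rho_2(G)}$ is reductive, and the Cassidy-based analysis applies verbatim with $\rho_2$ in place of $\rho_1$.

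For (1)$\Leftrightarrow$(4), with $G$ connected noncommutative, the already-proved (1)$\Leftrightarrow$(2) reduces matters to translating the algebraic condition of Theorem~\ref{thm:algsimple} for $H:=\overline{\rho_1(G)}$ into the differential condition for $G$. For (1)$\Rightarrow$(4): given any nontrivial $G$-module $X$, simplicity of $G$ forces $K:=\ker(G\to\GL(X))$ to have trivial connected component (else $K=G$ and $X$ would be trivial), so $K$ is finite. I would then replay the Hopf algebra argument of Theorem~\ref{thm:algsimple} in the differential setting: the differential coordinate ring $\K\{G\}$ is integral over $\K\{G/K\}$ and finitely generated as a $\K\{G/K\}$-module by elements contained in some finite-dimensional $G$-submodule $Y\subset\K\{G\}$; embedding any $G$-module $Z$ into $\K\{G\}^r$ and invoking \cite[Proposition~2]{OvchRecoverGroup} for the faithful $G/K$-module $X$ (whose differential Tannakian closure inside $\Rep_G$ equals $\Rep_{G/K}$) realises $Z$ as a subquotient of $T_D(X)\otimes Y$. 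For (4)$\Rightarrow$(1): given a hypothetical nontrivial proper connected normal differential algebraic subgroup $K\subset G$, a nontrivial $G/K$-module $X$, and a one-dimensional torus $T\subset K$, every prolongation $F_i^p X$ is $K$-trivial (since on $F_iX=X\oplus(\partial_i\otimes X)$ the $G$-action is diagonal on the two copies of $X$), so the $T$-weights of $T_D(X)\otimes Y$ are bounded by the highest $T$-weight of $Y$, while $Z:=Y\otimes Y$ attains twice that weight, contradicting the assumed subquotient presentation.

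The main obstacle, in my view, is the self-normalizing claim used in (2)$\Rightarrow$(1): ruling out a Kolchin-disconnected $G$ with simple Zariski closure requires an explicit computation of $N_S(S(C'))$ inside the Chevalley form $S$ and transport back through the isogeny $S\to H_1$. A secondary difficulty is verifying in (4)$\Rightarrow$(1) that the prolongation functor preserves $K$-triviality of $X$; this should follow from the vector-space description $F_iX=X\oplus(\partial_i\otimes X)$ with diagonal $G$-action, but deserves to be spelled out carefully to ensure the highest-weight contradiction goes through.
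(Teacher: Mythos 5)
Your handling of (1)$\Rightarrow$(2),(3) is essentially correct, and it is a mild variant of the paper's argument rather than a copy of it: the paper quotes \cite[page~230]{CassidyClassification} to produce a faithful $\rho_3$ with $\overline{\rho_3(G)}$ simple and then uses the extension argument from the proof of Theorem~\ref{main} (via Theorem~\ref{prolongation}) to get $\overline{\rho_1(G)}\cong\overline{\rho_2(G)}\cong\overline{\rho_3(G)}$, whereas you argue that the reductive closure is perfect (density of the commutator), hence semisimple, and then apply Theorem~\ref{Cassidy}\eqref{Cassidy3} with a single simple factor. Both routes rest on Theorem~\ref{main}, and your observation that the reductivity step of Theorem~\ref{main}\eqref{main2} needs only complete reducibility (so it applies to $\rho_2$) is exactly right.

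The genuine gaps are in the converse directions. For (2)$\Rightarrow$(1) the paper disposes of the implication by citing \cite[page~228, Corollary~2]{CassidyClassification}; you attempt to reprove it from Theorem~\ref{Cassidy}\eqref{Cassidy2}, and, as you yourself flag, the argument hinges on unestablished normalizer facts. Note also that what you would need is a statement about conjugates: Theorem~\ref{Cassidy}\eqref{Cassidy2} only gives $N$ \emph{conjugate} in $H_1$ to $H_1(C'')$, so ``non-normality of $H_1(C'')$ in $H_1(C')$'' is not the assertion required --- you need that no conjugate $hH_1(C'')h^{-1}$ lying inside $H_1(C')$ is normal there, together with self-normalization of $H_1(C')$ in $H_1$ to rule out extra Kolchin components. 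As written this implication is not proved. For (4)$\Rightarrow$(1), the step ``fix a one-dimensional torus $T\subset K$'' fails in the differential setting: a nontrivial connected normal differential algebraic subgroup $K$ need not contain a copy of $\Gm$, nor even of $\Gm(C)$ (for instance, $K$ could be a nontrivial Kolchin closed subgroup of $\Ga$, which has no elements of finite order and hence no such subgroup). This is precisely where the paper departs from Theorem~\ref{thm:algsimple}: it replaces the algebraic torus by the subgroup $\Gm(C)$ supplied by the proof of Lemma~\ref{proper} (every infinite reductive, in particular simple, LDAG contains a copy of $\Gm(C)$), whose weight decomposition is still available on differential representations because its points are constants; handling a possibly non-reductive $K$ needs an additional reduction that your sketch does not contain. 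Finally, in (1)$\Rightarrow$(4), finite generation of $\K\{G\}$ as a $\K\{G/K\}$-module does not follow from the Atiyah--Macdonald argument you import, since $\K\{G\}$ is only differentially finitely generated; a substitute (e.g.\ descent along the finite quotient $G\to G/K$) must be supplied, though the paper is equally terse at this point. (A small repair: $F_i^pX$ is $K$-trivial not because the action is ``diagonal'' --- it is block upper-triangular --- but because $A_g=\mathrm{Id}$ for $g\in K$ forces $\partial_iA_g=0$.)
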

\begin{proof}
We will first show the equivalence of statements~\eqref{diffsimple1}--\eqref{diffsimple3}.
Let $G$ be simple. Since $\Ru(G)$ is a normal connected differential algebraic subgroup, $\Ru(G) = \{e\}$. Therefore, $G$ is a reductive LDAG. By \cite[page 230]{CassidyClassification}, there is a faithful representation 
$$
\rho_3: G \to GL(W)
$$
with $\overline{\rho_3(G)}$ being a simple linear algebraic group. Hence, $\rho_3$ is completely reducible. By the first part of the proof of Theorem~\ref{main}, we have
$$
\overline{\rho_1(G)} \cong \overline{\rho_2(G)} \cong \overline{\rho_3(G)}.
$$
Thus, the linear algebraic groups $\overline{\rho_1(G)}$ and $\overline{\rho_2(G)}$ are simple.

If $\overline{\rho_1(G)}$ is a simple linear algebraic group, then, by \cite[page~228, Corollary~2]{CassidyClassification}, the LDAG $G$ is simple.

Due to \cite[Proposition~2]{OvchRecoverGroup} and isomorphism~\eqref{eq:commutingreps}, the proof of the equivalence of statements~\eqref{diffsimple1} and~\eqref{diffsimple4} repeats the proof of Theorem~\ref{thm:algsimple} using the proof of Lemma~\ref{proper}, where it is shown that every infinite reductive (and, therefore, simple) LDAG contains a subgroup isomorphic to $\Gm(C)$.
\end{proof}

 \section{Differential Chevalley's theorem}\label{diffChevalley}
It is interesting that the non-differential Chevalley theorem (a linear algebraic group has a representation where it is defined as the stabilizer of a line) appears in a paper with Kolchin \cite{ChevalleyKolchin}, where they give two proofs: one uses differential Galois theory and the other is direct. We give a direct version of the proof of its differential analogue. This result can be also considered as a corollary of \cite[Proposition~14]{Cassidy} but we will provide our own full argument.

As above, let $(\K,\Delta)$ be a differential field not necessarily differentially closed with $\Char\K=0$ and let $\U$ be the differential
closure of $\K$.

\begin{theorem} Let $G \subset \GL(V)$ a LDAG defined over $\K$. Then
there exists a faithful differential representation $$\rho : \GL(V) \to \GL(E)$$ and a line $L \subset E$ defined over $\K$ such that
$$
\rho(G)(\U) = \{g \in \rho(\GL(V))(\U)\:|\: gL = L\}.
$$
\end{theorem}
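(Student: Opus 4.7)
The plan is to adapt the classical proof of Chevalley's theorem to the differential algebraic setting, replacing Hilbert's basis theorem by the Ritt--Raudenbush basis theorem and exploiting local finiteness of the right translation action on $\K\{\GL(V)\}$. Let $A=\K\{\GL(V)\}$ and let $\I(G)\subset A$ be the radical differential ideal defining $G$ over $\K$. The right translation action of $\GL(V)$ on itself induces an action on $A$ by $(g\cdot f)(x):=f(xg)$; this is a $\Delta$-$\K$-algebra automorphism of $A$ for every $g$. Since $G$ is a subgroup, one has $g\cdot\I(G)\subset\I(G)$ if and only if $Gg\subset G$, i.e.\ $g\in G$. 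Hence $G(\U)=\{g\in\GL(V)(\U)\mid g\cdot\I(G)\subset\I(G)\}$.

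The first step is \emph{local finiteness}: for each $f\in A$, the $\GL(V)$-orbit of $f$ spans a finite-dimensional $\K$-subspace. This follows from the comultiplication $c:A\to A\otimes A$ being a $\Delta$-algebra homomorphism which sends any $f$ to a finite sum $\sum f_{(1)}\otimes f_{(2)}$, so that $g\cdot f=\sum f_{(2)}(g)\,f_{(1)}\in\Span_\K\{f_{(1)}\}$. The check that $c(\theta f)$ is a finite sum for every $\theta\in\Theta$ proceeds by induction on the order of $\theta$ using $c\circ\partial_i=(\partial_i\otimes\id+\id\otimes\partial_i)\circ c$. The restriction of the action to any finite-dimensional $\GL(V)$-stable subspace $W\subset A$ is then a differential polynomial homomorphism $\GL(V)\to\GL(W)$.

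The second step combines local finiteness with Ritt--Raudenbush: choose $f_1,\ldots,f_r\in\I(G)$ with $\I(G)=\sqrt{[f_1,\ldots,f_r]}$, where $[\,\cdot\,]$ denotes the differential ideal, and enlarge $\Span_\K\{f_1,\ldots,f_r\}$ to a finite-dimensional $\GL(V)$-stable $\K$-subspace $W\subset A$ defined over $\K$. Set $M:=W\cap\I(G)$. The claim is that $G(\U)=\{g\in\GL(V)(\U)\mid g\cdot M\subset M\}$: the forward direction uses $g\cdot W=W$ and $g\cdot\I(G)\subset\I(G)$; for the converse, $g\cdot M\subset M$ implies that $g$ preserves the differential ideal $[M]$ (since $g$ commutes with each $\partial_i$) and hence preserves its radical $\sqrt{[M]}=\I(G)$, using that $M\supset\{f_1,\ldots,f_r\}$.

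The final step passes to an exterior power. Set $d=\dim_\K M$, $E_0:=\bigwedge^d W$, and $L_0:=\bigwedge^d M\subset E_0$; these are defined over $\K$ and $L_0$ is a line. The induced representation $\rho_{E_0}:\GL(V)\to\GL(E_0)$ is a differential polynomial homomorphism, and $gL_0=L_0$ if and only if $gM=M$, if and only if $g\in G$. To obtain faithfulness, take $E:=E_0\oplus V$ with $\rho:=\rho_{E_0}\oplus\id_V$ and $L:=L_0\oplus\{0\}\subset E$; the tautological summand makes $\rho$ faithful while leaving the stabilizer of $L$ equal to $\rho(G)$. The main technical obstacle is establishing local finiteness cleanly in the differential Hopf algebra framework and confirming that the resulting action on $W$ is genuinely given by differential polynomials in the matrix entries of $g$; once this is in place, the remainder of the argument parallels the classical Chevalley theorem verbatim.
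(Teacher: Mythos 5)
Your proposal is correct and follows essentially the same route as the paper: local finiteness of the right-translation action via the comultiplication, the Ritt--Raudenbush basis theorem to obtain a finite-dimensional $\K$-defined subspace whose intersection with $\I(G)$ generates it as a radical differential ideal, the observation that right translations are differential automorphisms so the stabilizer of that subspace is exactly $G(\U)$, and finally an exterior power plus a faithful direct summand. The only (minor) difference is bookkeeping: the paper works explicitly with $\U\otimes_{\K} I$ and phrases the ideal-preservation step via the prolongations $F_q^p(W)$, whereas you phrase it via $\sqrt{[M]}=\I(G)$, which is the same argument.
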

\begin{proof} We shall closely follow \cite[Proposition~1.9 in Chapter~I and Theorem~5.1 in Chapter~II]{Borel} with
slightly different notation and differential modifications. 
Let $I = \{F\}$ be the
radical differential ideal in $$A :=\K\left\{y_{ij},1/\det\right\}$$ defining $G$ in $\GL(V)$.  By the Ritt-Raudenbush theorem \cite[Theorem 7.1]{Kap}, the set $F$  can be chosen to be finite.
Let $f \in F$ and $$\Delta_A : A \to A\otimes A$$ be the comultiplication inducing the
regular representation $$r : \GL(V) \to \GL(A).$$
Let $g, x \in \GL(V)(\U)$
and
$$
\Delta_A(f) = \sum_{i=1}^nf_i\otimes g_i
$$
with $f_i$ and $g_i \in A$, $1\Le i\Le n$.
We have
$$
\left(r_g(f)\right)(x)=f(x\cdot g) = \Delta_A(f)(x,g)= \sum_{i=1}^nf_i(x) g_i(g).
$$
Therefore, for every $g \in \GL(V)(\U)$, we have
$$
r_g(f) \in \Span_{\U}\{f_1,\ldots, f_n\} =: W_f.
$$
Let
\begin{align*}
\F_f = \{W\subset A\:|\:& W \text{ is a finite dimensional $\U$-vector space defined over $\K$} \\
&\text{and } r_g(f) \in W \text{ for all } g\in\GL(V)(\U) \}
\end{align*}
Since $W_f \in \F_f$ by definition, we have $\F_f \ne \varnothing$. Let $$N \in \F_f\quad \text{and}\quad h \in \GL(V)(\U).$$ Since $r_h$ is a $\K$-linear automorphism of $A$, the vector space $r_h(N)$
is defined over $\K$ and is finite dimensional over $\U$. Moreover, let  $$g \in \GL(V)(\U).$$ By definition, $$r_{h^{-1}g}(f)\in N.$$ Hence,  $$r_g(f)=r_{hh^{-1}g}(f)=r_h\left(r_{h^{-1}g}(f)\right)\in r_h(N).$$ Therefore,
\begin{equation}\label{rhNinF}
r_h(N) \in \F.
\end{equation} Let
$$
V_f = \bigcap_{N \in \F_f} N.
$$
Since each $N \in \F_f$ is a finite dimensional $\U$-vector space, $V_f$ is so as well.
Since $$f = r_e(f)\in N$$ for all $N \in \F_f$, we have $f \in V_f$. Inclusion~\eqref{rhNinF} implies that $$r_g(V_f)\subset V_f$$ for all $g \in \GL(V)(\U)$.
Each $N \in \F_f$ is defined over $\K$, that is, there exists a set $F_N$ of linear equations
with coefficients in $\K$ such that $N$ is the zero set of $F_N$ in $A$. The $\U$-vector space $V_f$ is the zero set of $$\bigcup_{N \in \F_f} F_N$$ in $A$ and, hence, is defined over $\K$.
Therefore, the finite set $F$ is contained in a finite
dimensional, defined over $\K$, $\GL(V)(\U)$-invariant $\U$-subspace $U$ (the smallest vector space containing all $V_f$, $f \in F$) under
the right translations.

 Since $\U\otimes_{\K} I$ and $U$ are $G(\U)$-invariant and $U$ is finite dimensional,
the vector space $$W := (\U\otimes_{\K} I)\cap U$$ is a finite dimensional $G(\U)$-invariant subspace of $\U\left\{y_{ij},1/\det\right\}$ defined over $\K$ that generates $\U\otimes_{\K} I$ as a radical differential ideal. Then,
$$
G(\U) = \{g \in \GL(V)(\U)\:|\: gW =W\}.
$$
Indeed, $$G(\U) \subset \{g \in \GL(V)(\U)\:|\: gW =W\}$$ by definition of $W$. Let $g \in \GL(V)(\U)$
be such that $gW = W$. Since $W$ generates $\U\otimes_{\K} I$ as a radical differential ideal, for
any $f \in I$ there exists $d \in \Z_{\Ge 0}$ such that
$$
f^d \in \left(\bigcup\limits_{p=1\atop{1\Le q \Le m}}^\infty F_q^p(W)\right)\K\{y_{ij},1/\det\}.
$$
Since $g$
is a {\it differential} automorphism of $\U\{y_{ij},1/\det\}$, we have
$$
gF_i^p(W) = F_i^p(W),\ 1\Le i\Le m,
$$
and, therefore,
$$(gf)^d \in \left(\bigcup\limits_{p=1\atop{1\Le q \Le m}}^\infty F_q^p(W)\right)\K\{y_{ij},1/\det\}.$$
Hence, $$g\U\otimes_{\K}I = \U\otimes_{\K}I$$ and $g \in G(\U)$.
Let $$q = \dim W,\quad E = \bigwedge^qU,\quad \text{and}\quad D = \bigwedge^qW \subset E.$$
If $E$ is not a faithful representation of $\GL(V)$, replace it with $E\oplus F$,
where $F$ is a faithful representation of $\GL(V)$. Now, as in \cite[Theorem 5.1, Chapter II]{Borel}, the result follows from a linear algebra statement \cite[Lemma 5.1, Chapter II]{Borel}.
\end{proof}

\section{Acknowledgments}
The authors are grateful to Phyllis Cassidy, Sergey Gorchinskiy, Charlotte Hardouin,  Michael Singer, and the referee for their very helpful suggestions.

\bibliographystyle{elsarticle-num}
\bibliography{reductive}
 \end{document}